\documentclass[11pt,a4paper,reqno]{amsart}%
\usepackage[latin1]{inputenc}
\usepackage{mathrsfs}
\usepackage{dsfont}
\usepackage{amsmath}
\usepackage{amssymb}
\usepackage{amsthm}
\usepackage{amsfonts}
\usepackage{amstext}
\usepackage{amsopn}
\usepackage{amsxtra}
\usepackage{mathrsfs}
\usepackage{dsfont}
\usepackage{esint}
\usepackage{pst-all}
\usepackage{graphicx}
\newtheorem{theorem}{Theorem}
\newtheorem{proposition}{Proposition}
\newtheorem{remark}{Remark}

\newtheorem{lemma}{Lemma}
\newtheorem{corollary}{Corollary}

\newcommand{\ii}{\infty}
\newcommand\R{{\ensuremath {\mathbb R} }}
\newcommand\C{{\ensuremath {\mathbb C} }}

\newcommand\1{{\ensuremath {\mathds 1} }}
\renewcommand\phi{\varphi}

\newcommand{\gS}{\mathfrak{S}}

\newcommand{\cW}{\mathcal{W}}
\newcommand{\cF}{\mathcal{F}}

\renewcommand{\epsilon}{\varepsilon}
\newcommand\pscal[1]{{\ensuremath{\left\langle #1 \right\rangle}}}
\newcommand{\norm}[1]{ \left| \! \left| #1 \right| \! \right| }
\newcommand{\tr}{{\rm Tr}}

\renewcommand{\geq}{\geqslant}
\renewcommand{\leq}{\leqslant}

%ARTICLE
\title[Strichartz inequality for orthonormal functions]{Strichartz inequality for orthonormal functions}

\author[R.L. Frank]{Rupert L. FRANK}
\address{Department of Mathematics, Caltech, Pasadena, CA 91125, USA}
\email{rlfrank@caltech.edu}

\author[M. Lewin]{Mathieu LEWIN}
\address{CNRS \& Universit\'e de Cergy-Pontoise, Mathematics Department (UMR 8088), F-95000 Cergy-Pontoise, France} 
\email{mathieu.lewin@math.cnrs.fr}

\author[E.H. Lieb]{Elliott H. LIEB}
\address{Departments of Mathematics and Physics, Princeton University, Jadwin Hall, Washington Road, Princeton, NJ
08544, USA}
\email{lieb@princeton.edu}

\author[R. Seiringer]{Robert SEIRINGER}
\address{Institute of Science and Technology Austria, Am Campus 1, 3400 Klosterneuburg, Austria}
\email{robert.seiringer@ist.ac.at}

\date{\today}

\begin{document}

\begin{abstract}
We prove a Strichartz inequality for a system of orthonormal functions, with an optimal behavior of the constant in the limit of a large number of functions. The estimate generalizes the usual Strichartz inequality, in the same fashion as the Lieb-Thirring inequality generalizes the Sobolev inequality. As an application, we consider the Schrödinger equation in a time-dependent potential and we show the existence of the wave operator in Schatten spaces.
\end{abstract}

\thanks{\copyright\,2013 by the authors. This paper may be reproduced, in its entirety, for non-commercial purposes. To appear in \emph{Journal of the European Mathematical Society}.}

\maketitle

\tableofcontents

\section{Introduction}

In quantum mechanics, a system of $N$ independent fermions in $\R^d$ is described by a collection of $N$ orthonormal functions $u_1,...,u_N$ in $L^2(\R^d)$. For this reason, functional inequalities involving a large number of orthonormal functions are very useful in the mathematical analysis of large quantum systems. In~\cite{LieThi-75,LieThi-76}, Lieb and Thirring proved the first bound of this kind:
\begin{equation}
\int_{\R^d}\left(\sum_{j=1}^N |\nabla u_j(x)|^2\right)\, dx\geq C\int_{\R^d}\left(\sum_{j=1}^N|u_j(x)|^2\right)^{1+\tfrac{2}{d}}dx
\label{eq:Lieb-Thirring}
\end{equation}
where $C>0$ is independent of $N$ and of the orthonormal functions $u_j$. The Lieb-Thirring inequality~\eqref{eq:Lieb-Thirring} generalizes the Gagliardo-Nirenberg-Sobolev inequality
\begin{equation}
\int_{\R^d}|\nabla u(x)|^2\, dx\geq C'\int_{\R^d}|u(x)|^{2+\tfrac{4}{d}}dx
\label{eq:Sobolev}
\end{equation}
for an $L^2$--normalized function $u$ and it is a fundamental tool for understanding the stability of matter~\cite{Lieb-76,Lieb-90,LieSei-09}. The orthogonality between the functions $u_j$ is essential here to get the bound~\eqref{eq:Lieb-Thirring}. Using the Sobolev inequality~\eqref{eq:Sobolev} and the triangle inequality, we would only obtain a constant $C$ that goes to $0$ in the limit $N\to\ii$. For other inequalities for systems of orthonormal functions, see, for example,~\cite{Lieb-83d}.

The purpose of this article is to prove a generalization of the well known Strichartz inequality for systems of orthonormal functions. We expect that our new inequality will play an important role in understanding dispersive effects in large or infinite quantum systems. 

%%%%%%%%%%%%%%%%%%%%%%%%%%%%%%%%%%%%
\section{An inequality for orthonormal functions and its dual}
%%%%%%%%%%%%%%%%%%%%%%%%%%%%%%%%%%%%

\subsection{Strichartz inequality for orthonormal functions}
We recall that, in the case of the Schrödinger equation, the Strichartz inequality reads
\begin{equation}
\int_\R  \left(\int_{\R^d}\left|\big(e^{it\Delta}u\big)(x) \right|^{2q}\,dx\right)^{\tfrac{p}{q}}\,dt\leq C\left(\int_{\R^d}|u(x)|^2\,dx\right)^{p}
\label{eq:usual-Strichartz}
\end{equation}
where $p,q\geq1$ satisfy $(p,q,d)\neq (1,\ii,2)$ and
\begin{equation}
\frac{2}{p}+\frac{d}{q}=d, 
\label{eq:condition-p-q}
\end{equation}
see~\cite{Strichartz-77,Yajima-87,GinVel-92,GinVel-95,LinSog-95,KeeTao-98,Cazenave-03,Tao-06}. Here $e^{it\Delta}u$ is the unique solution to the free Schrödinger equation $i\,\dot{u}(t,x)=-\Delta u(t,x)$ such that $u(0,x)=u(x)$.
Our main result is the following

\begin{theorem}[Strichartz inequality for orthonormal functions]\label{thm:version_u}
Assume that $p,q,d\geq1$ satisfy
$$1< q\leq 1+\frac{2}{d}\quad\text{and}\quad \frac{2}{p}+\frac{d}{q}=d.$$
For any (possibly infinite) system $(u_j)$ of orthonormal functions in $L^2(\R^d)$ and any coefficients $(n_j)\subset\C$, we have 
\begin{equation}
\boxed{\int_\R  \left(\int_{\R^d} \bigg|\sum_{j}n_j\left|\big(e^{it\Delta}u_j\big)(x) \right|^{2}\bigg|^q\,dx\right)^{\tfrac{p}{q}}\!dt\leq C_{d,q}^p\left(\sum_{j}|n_j|^{\tfrac{2q}{q+1}}\right)^{\tfrac{p(q+1)}{2q}}}
\label{eq:Strichartz-orth-fns}
\end{equation}
where $C_{d,q}$ is a universal constant which only depends on $d$ and $q$.
\end{theorem}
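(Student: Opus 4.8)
The strategy is to recast the inequality in terms of density matrices and then dualize. Let $\gamma=\sum_j n_j\,|u_j\rangle\langle u_j|$ be the (generally non-self-adjoint, but normal) operator on $L^2(\R^d)$ associated with the data; since the $u_j$ are orthonormal its singular values are the $(|n_j|)$, so $\norm{\gamma}_{\gS^\alpha}=\big(\sum_j|n_j|^\alpha\big)^{1/\alpha}$ for $\alpha:=2q/(q+1)$, and the right-hand side of~\eqref{eq:Strichartz-orth-fns} is exactly $C_{d,q}^p\,\norm{\gamma}_{\gS^\alpha}^p$. Introduce the space-time extension $\cE:L^2(\R^d)\to L^2(\R\times\R^d)$, $(\cE u)(t,x)=(e^{it\Delta}u)(x)$, and write $M_V$ for multiplication by $V(t,x)$. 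The function inside the norm on the left of~\eqref{eq:Strichartz-orth-fns} is the one-body density of $e^{it\Delta}\gamma e^{-it\Delta}$, so that side equals $\norm{\rho}_{L^p_tL^q_x}^p$ with $\rho(t,x)=\sum_j n_j|(e^{it\Delta}u_j)(x)|^2$. Using $(L^p_tL^q_x)^*=L^{p'}_tL^{q'}_x$, the identity $\iint\rho\,\overline{V}=\tr\big(\gamma\,\cE^*M_{\overline{V}}\cE\big)$, and the Hölder inequality for Schatten norms, it suffices to prove the dual operator bound
\[
\norm{\cE^*M_V\cE}_{\gS^{\alpha'}}\ \leq\ C_{d,q}\,\norm{V}_{L^{p'}_tL^{q'}_x},\qquad \alpha'=2q',
\]
with the same constant, where $\cE^*M_V\cE=\int_\R e^{-it\Delta}V(t,\cdot)\,e^{it\Delta}\,dt$ on $L^2(\R^d)$.

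I would establish this by complex interpolation (Stein's theorem for an analytic family of operators acting on $V$) between the two extreme values of $q$ compatible with the scaling relation $2/p+d/q=d$. At $q=1$ — hence $p=\infty$, $p'=1$, $\alpha'=\infty$ — the bound is immediate from $\norm{e^{it\Delta}}_{L^2\to L^2}=1$, namely $\norm{\cE^*M_V\cE}_{\gS^\infty}\leq\int_\R\norm{V(t,\cdot)}_{L^\infty_x}\,dt$. The whole difficulty is the opposite endpoint $q=1+2/d$, where $p=q=1+2/d$, $p'=q'=(d+2)/2$ and $\alpha'=d+2$, i.e.
\[
\norm{\cE^*M_V\cE}_{\gS^{d+2}}\ \leq\ C_d\,\norm{V}_{L^{(d+2)/2}_{t,x}}.
\]
A short computation shows that interpolating this with the trivial $q=1$ bound along the line $2/p+d/q=d$ reproduces the displayed dual estimate for the entire range $1<q\leq 1+2/d$, and hence Theorem~\ref{thm:version_u}. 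So everything comes down to the critical endpoint.

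The critical endpoint is the main obstacle, and it genuinely needs the dispersive decay of the free propagator: since the free Schrödinger kernel has constant modulus, every Hilbert--Schmidt norm built from $\cE$ diverges, so no soft interpolation from an $L^2$-type estimate is available. To prove it I would factor $M_V=M_{w_1}^*M_{w_2}$ with $|w_1|=|w_2|=|V|^{1/2}$, so that $\cE^*M_V\cE=(M_{w_1}\cE)^*(M_{w_2}\cE)$ and, by Schatten--Hölder, $\norm{\cE^*M_V\cE}_{\gS^{d+2}}\leq\norm{M_{w_1}\cE}_{\gS^{2(d+2)}}\norm{M_{w_2}\cE}_{\gS^{2(d+2)}}$; it then suffices to show $\norm{M_w\cE}_{\gS^{2(d+2)}}\leq C_d\norm{w}_{L^{d+2}_{t,x}}$, and applying $TT^*$ once more this is equivalent to the estimate $\norm{M_w\,\cE\cE^*\,M_{\overline{w}}}_{\gS^{d+2}}\leq C_d\norm{w}_{L^{d+2}_{t,x}}^2$ for the weighted space-time operator whose middle factor $\cE\cE^*$ has the explicit kernel $c_d\,(t-s)^{-d/2}e^{i|x-y|^2/(4(t-s))}$ on $L^2(\R\times\R^d)$. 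I would prove this bound on the resulting Birman--Schwinger-type operator by a dyadic decomposition of $\cE\cE^*$ in the size of $|t-s|$, estimating each dyadic block by interpolating between the dispersive bound $\norm{e^{i\tau\Delta}}_{L^1\to L^\infty}\lesssim|\tau|^{-d/2}$ and $\norm{e^{i\tau\Delta}}_{L^2\to L^2}=1$ — a Schatten-norm analogue of the Keel--Tao argument — the gain that makes the dyadic sum converge being that the target index $d+2$ lies strictly below the diagonal exponent; an alternative is to deduce the $\gS^{d+2}$ bound from the classical inhomogeneous (retarded) Strichartz estimate by an abstract Schatten-class duality lemma. In any case, this harmonic-analytic estimate for $M_w\,\cE\cE^*\,M_{\overline{w}}$ is where essentially all the work lies.
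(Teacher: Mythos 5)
Your dualization, the identification $\cE^*M_V\cE=\int_\R e^{-it\Delta}V(t,\cdot)e^{it\Delta}\,dt$, the resulting dual Schatten bound $\norm{\cE^*M_V\cE}_{\gS^{2q'}}\leq C\norm{V}_{L^{p'}_tL^{q'}_x}$, and the two-point complex interpolation between $(p',q')=(1,\infty)$ (trivial) and the diagonal point $(p',q')=(1+d/2,1+d/2)$ all match the paper's proof. The issue is your treatment of the diagonal endpoint, which is where all the content lives. A small remark first: the factorization $M_V=M_{w_1}^*M_{w_2}$ followed by $TT^*$ is not a genuine reduction. Since $M_w\cE\cE^*M_{\bar w}$ and $\cE^*M_{|w|^2}\cE$ share the same nonzero spectrum, what you are left to prove is precisely the original endpoint claim for the nonnegative potential $|w|^2$; you have simply re-derived the paper's ``without loss of generality $V\geq0$'' step. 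The genuine gap is the proposed estimate on this operator. Dyadically decomposing $\cE\cE^*$ in $|t-s|$ and interpolating $\norm{e^{i\tau\Delta}}_{L^1\to L^\infty}\lesssim|\tau|^{-d/2}$ against $\norm{e^{i\tau\Delta}}_{L^2\to L^2}=1$ in Keel--Tao fashion is machinery for proving $L^p_tL^q_x$ bounds and does not output Schatten norms. If one instead tries to interpolate the Schatten scale between $\gS^\infty$ and $\gS^2$ on the dyadic pieces, one meets the obstruction you yourself flag: $|e^{i\tau\Delta}(x,y)|=c_d|\tau|^{-d/2}$ has no decay in $x,y$, so the Hilbert--Schmidt norm of the $j$-th block involves $\norm{w(t,\cdot)}_{L^2_x}$, which is not controlled by $\norm{w}_{L^{d+2}_{t,x}}$; the pieces are not in $\gS^2$ under your hypotheses. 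And the suggested ``abstract Schatten-class duality lemma'' that would transfer the scalar retarded Strichartz estimate into an $\gS^{d+2}$ bound would make the Schatten improvement of this theorem a formal triviality, which it is not (cf.\ Proposition~\ref{prop:semi-classical}).

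What the paper actually does at the endpoint is structurally different. After reducing to $V\geq0$, write $e^{-it\Delta}V(t,x)e^{it\Delta}=V(t,x+2tp)$ and expand
$$
\tr\left(\int_\R V(t,x+2tp)\,dt\right)^{d+2}=\int_{\R^{d+2}}\tr\Big(\prod_{j=1}^{d+2}V(t_j,x+2t_jp)\Big)\,dt_1\cdots dt_{d+2}.
$$
Each trace is cycled and factored by Schatten--H\"older into \emph{pairwise} products $\sqrt{V(t_i,x+2t_ip)}\sqrt{V(t_j,x+2t_jp)}$, which are controlled by the generalized Kato--Seiler--Simon inequality (Lemma~\ref{lem:gKSS}),
$$
\norm{f(\alpha x+\beta p)\,g(\gamma x+\delta p)}_{\gS^r}\leq(2\pi|\alpha\delta-\beta\gamma|)^{-d/r}\norm{f}_{L^r}\norm{g}_{L^r},\qquad r\geq2,
$$
itself a direct Hilbert--Schmidt computation plus interpolation — and here the $\gS^2$ computation \emph{does} converge precisely because one takes a product of two such pseudodifferential factors, not a single dyadic block. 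This produces the singular weight $\prod_j|t_j-t_{j+1}|^{-d/(d+2)}$, and the $t$-integral is closed by the multilinear Hardy--Littlewood--Sobolev inequality. That inequality requires $\sum_k 1/r_k>1$, and at $q'=1+d/2$ one has $\sum_k 1/r_k=2$, with margin; this is exactly where the constraint $q\leq1+2/d$ enters, and why the argument fails at the true endpoint $q'=(d+1)/2$, where the sum drops to $1$. So: same skeleton (dualize, interpolate to the diagonal endpoint), but the endpoint estimate in the paper is a $(d+2)$-linear expansion closed by generalized Kato--Seiler--Simon plus multilinear HLS, which is the ingredient your sketch is missing.
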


\begin{remark}
For $q=1$ and $p=\ii$, we have the bound 
\begin{equation}
\sup_{t\in\R} \left(\int_{\R^d}\bigg|\sum_{j}n_j\left|\big(e^{it\Delta}u_j\big)(x) \right|^{2}\bigg|\,dx\right)\leq \sum_{j}|n_j|
\label{eq:q=1}
\end{equation}
which is an obvious consequence of the triangle inequality and of the fact that $e^{it\Delta}$ is a unitary operator on $L^2(\R^d)$, for any fixed $t\in\R$. Note that~\eqref{eq:q=1} does not use the orthogonality of the functions $u_j$.
\end{remark}

The inequality~\eqref{eq:Strichartz-orth-fns} can be rewritten in a convenient form in terms of the operator 
$$\gamma:=\sum_{j} n_j|u_j\rangle\langle u_j|$$
which acts on $L^2(\R^d)$. Here we have used Dirac's notation $|u\rangle\langle v|$ for the rank-one operator $f\mapsto \pscal{v,f}u$. Because the $u_j$ form an orthonormal system, the $n_j$ are precisely the eigenvalues of the operator $\gamma$. The evolved operator 
$$\gamma(t):=e^{it\Delta}\gamma e^{-it\Delta}=\sum_{j} n_j|e^{it\Delta}u_j\rangle\langle e^{it\Delta}u_j|$$
solves (in a weak sense) the von-Neumann Schrödinger equation
$i\dot\gamma(t)=[-\Delta,\gamma(t)]$ with $\gamma(0)=\gamma$. Introducing the density $\rho_{\gamma(t)}:=\sum_{j} n_j|e^{it\Delta}u_j|^2$
we see that~\eqref{eq:Strichartz-orth-fns} can be reformulated as
\begin{equation}
\boxed{\norm{\rho_{\gamma(t)}}_{L^p_t(\R,L^q_x(\R^d))}\leq C_{d,q}\norm{\gamma}_{\gS^{\frac{2q}{q+1}}}}
\label{eq:Strichartz-Schatten}
\end{equation}
where
\begin{equation*}
\norm{\gamma}_{\gS^{\frac{2q}{q+1}}}:=\left(\sum_{j}|n_j|^{\frac{2q}{q+1}}\right)^{\frac{q+1}{2q}}
\end{equation*}
is called the \emph{Schatten norm} of the operator $\gamma$ (see for instance~\cite{Simon-79} for elementary properties of Schatten spaces). The main advantage of the formulation~\eqref{eq:Strichartz-Schatten} is that we do not need to specify the functions $u_j$ and the complex numbers $n_j$ anymore, they are now all included in the operator $\gamma$.

The coefficients $n_j$ need not be real. In practice, the operator $\gamma$ is the one particle density matrix of fermions and it must satisfy the Pauli principle $0\leq\gamma\leq1$, which means that $0\leq n_j\leq 1$ for all $j$. Of particular interest is the case of $\gamma$ being a finite-rank orthogonal projection, that is, when $N$ of the $n_j$ are equal to 1 and the others vanish:
\begin{equation}
\int_\R  \left(\int_{\R^d} \bigg(\sum_{j=1}^N \left|\big(e^{it\Delta}u_j\big)(x) \right|^{2}\bigg)^q\,dx\right)^{\tfrac{p}{q}}dt\leq C_{d,q}^p N^{\tfrac{p(q+1)}{2q}}.
\label{eq:Strichartz-orth-fns-N}
\end{equation}
% To derive~\eqref{eq:Strichartz-orth-fns} from~\eqref{eq:Strichartz-orth-fns-N}, the first step is to replace $n_j$ by $|n_j|$ on the left by means of the triangle inequality. Writing then 
% $$\sum_{j}|n_j|\left|\big(e^{it\Delta}u_j\big)(x) \right|^{2}=\int_0^\ii d\lambda\,\sum_{j}\1(|n_j|\geq \lambda)\left|\big(e^{it\Delta}u_j\big)(x) \right|^{2},$$
% and using Minkowski's inequality for the integral over $\lambda$, one can obtain~\eqref{eq:Strichartz-orth-fns} from~\eqref{eq:Strichartz-orth-fns-N}.
The inequality~\eqref{eq:Strichartz-orth-fns-N} has a much better scaling with respect to the number $N$ of functions, than the power $N^p$ which can be obtained by using the usual Strichartz inequality~\eqref{eq:usual-Strichartz} and the triangle inequality (as was stated before in~\cite{Castella-97}, for example). The power is decreased to $p(q+1)/(2q)<p$ due to the orthonormality condition.

\subsection{Optimality of the Schatten exponent}

Using a semi-classical argument based on coherent states, we can prove that the power $p(q+1)/(2q)$ in~\eqref{eq:Strichartz-orth-fns} and in~\eqref{eq:Strichartz-orth-fns-N} is optimal, it cannot be decreased further. This is the content of the following

\begin{proposition}[Optimality of the Schatten exponent]\label{prop:semi-classical}
Assume that $d,p,q\geq1$ satisfy $2/p+d/q=d$. Then we have 
\begin{equation}
\sup_{\gamma\in \gS^{r}} \frac{\norm{\rho_{e^{it\Delta }\gamma e^{-it\Delta }}}_{L^p_t(\R,L^q_x(\R^d))}}{\norm{\gamma}_{\gS^r}}=+\ii
\label{eq:optimal-Schatten}
\end{equation}
for all $r>\frac{2q}{q+1}$.
\end{proposition}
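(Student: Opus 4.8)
The plan is to test \eqref{eq:optimal-Schatten} against a semiclassical family of trial operators built from coherent states, indexed by a large number $\lambda$, and to show that the quotient grows like a positive power of $\lambda$ exactly when $r>2q/(q+1)$. Fix a Schwartz function $\phi$ on $\R^d$ with $\norm{\phi}_{L^2(\R^d)}=1$, and for $(y,v)\in\R^d\times\R^d$ set $\phi_{y,v}(z):=\phi(z-y)e^{iv\cdot z}$. With $B_\lambda:=\{\,|\cdot|\le\lambda\,\}$ and $\Omega_\lambda:=B_\lambda\times B_\lambda$ in phase space, define
\[
\gamma_\lambda:=(2\pi)^{-d}\iint_{\Omega_\lambda}|\phi_{y,v}\rangle\langle\phi_{y,v}|\,dy\,dv .
\]
The first step is to record the elementary properties of $\gamma_\lambda$. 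The resolution of the identity $\iint_{\R^d\times\R^d}|\phi_{y,v}\rangle\langle\phi_{y,v}|\,dy\,dv=(2\pi)^d$ yields at once $0\le\gamma_\lambda\le1$ and $\tr\gamma_\lambda=(2\pi)^{-d}|\Omega_\lambda|=c_d\lambda^{2d}$; in particular $\gamma_\lambda\in\gS^1\subset\gS^r$ (recall $r>2q/(q+1)\ge1$), and since $0\le\gamma_\lambda\le1$ forces $\gamma_\lambda^r\le\gamma_\lambda$, we get the upper bound $\norm{\gamma_\lambda}_{\gS^r}\le(\tr\gamma_\lambda)^{1/r}=c_d'\lambda^{2d/r}$.

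The heart of the matter is a lower bound on the numerator over the \emph{fixed} time interval $|t|\le1$. Using $e^{it\Delta}\big(e^{iv\cdot z}g(z)\big)=e^{iv\cdot z-it|v|^2}(e^{it\Delta}g)(z-2tv)$ and that translations commute with $e^{it\Delta}$, one finds
\[
\rho_{e^{it\Delta}\gamma_\lambda e^{-it\Delta}}(x)=(2\pi)^{-d}\iint_{\Omega_\lambda}\psi_t(x-y-2tv)\,dy\,dv,\qquad\psi_t:=\big|e^{it\Delta}\phi\big|^2 ,
\]
where $\int_{\R^d}\psi_t=1$. The dispersive spreading of $\psi_t$ is controlled by a second moment: the Galilei-type identity $x_je^{it\Delta}=e^{it\Delta}(x_j-2it\partial_j)$ gives $\int_{\R^d}|z|^2\psi_t(z)\,dz\le C_\phi(1+|t|)^2$, whence $\int_{|z|>\lambda}\psi_t\le4C_\phi\lambda^{-2}\le\tfrac12$ for $|t|\le1$, provided $\lambda$ exceeds a constant depending only on $\phi$. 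Since $|y+2tv|\le3\lambda$ on $\Omega_\lambda$ when $|t|\le1$, integrating the displayed identity over $\{|x|\le4\lambda\}$ and combining $\int\psi_t=1$ with this tail bound gives $\int_{|x|\le4\lambda}\rho_{e^{it\Delta}\gamma_\lambda e^{-it\Delta}}\ge\tfrac12\tr\gamma_\lambda\gtrsim\lambda^{2d}$ for every $|t|\le1$. Hölder's inequality on the ball $\{|x|\le4\lambda\}$ then yields $\norm{\rho_{e^{it\Delta}\gamma_\lambda e^{-it\Delta}}}_{L^q_x}\gtrsim\lambda^{2d}\lambda^{-d(1-1/q)}=\lambda^{d+d/q}$ uniformly for $|t|\le1$, and taking the $L^p$ norm in $t$ over this fixed interval, $\norm{\rho_{e^{it\Delta}\gamma_\lambda e^{-it\Delta}}}_{L^p_t(\R,L^q_x(\R^d))}\gtrsim\lambda^{d+d/q}$.

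Combining the two estimates,
\[
\frac{\norm{\rho_{e^{it\Delta}\gamma_\lambda e^{-it\Delta}}}_{L^p_t(\R,L^q_x(\R^d))}}{\norm{\gamma_\lambda}_{\gS^r}}\gtrsim\lambda^{\,d+d/q-2d/r},
\]
and the exponent is strictly positive if and only if $1+1/q>2/r$, i.e. $r>2q/(q+1)$; letting $\lambda\to\ii$ then proves \eqref{eq:optimal-Schatten}. The one delicate point is the tail estimate for $\psi_t$: one must ensure that the free evolution, which spreads a wave packet at a rate of order $|t|$, keeps it concentrated on a scale comparable to the spatial extent $\lambda$ of $\Omega_\lambda$. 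This is precisely why it pays to work on a time window $|t|\le1$ of fixed length --- the spreading is then $O(1)$, hence negligible against $\lambda$ --- after which the second-moment bound makes the estimate immediate. (The balance relation $2/p+d/q=d$ plays no role in this particular construction; it is what turns this scaling exponent into the critical one for the inequality of Theorem~\ref{thm:version_u}.)
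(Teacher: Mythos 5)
Your proof is correct and follows essentially the same strategy as the paper: build a trial operator $\gamma$ from coherent states spread over a large phase-space region of volume $\sim\lambda^{2d}$, observe that the density stays roughly flat on a ball of radius $\sim\lambda$ for $|t|\lesssim1$, and compare the resulting scaling $\lambda^{d+d/q}$ of the numerator with the scaling $\lambda^{2d/r}$ of the Schatten norm. The paper works with explicit Gaussian coherent states and three parameters $(\beta,L,\mu)$ so that everything can be computed in closed form, whereas you use a generic Schwartz profile with a single parameter, replacing the exact Gaussian integrals by a Chebyshev/second-moment tail estimate and the Berezin--Lieb inequality by the elementary bound $\tr\gamma^r\le\tr\gamma$ valid for $0\le\gamma\le1$ and $r\ge1$; these are presentational variants of the same semiclassical construction.
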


We now present a heuristic computation explaining why Theorem~\ref{thm:version_u} can be thought of as a semi-classical bound.
In a certain sense this is also the idea behind the proof of Proposition~\ref{prop:semi-classical} (given in Section~\ref{sec:proof_prop_semi-classics} below).

We consider a system of fermions which, at time $t=0$, occupy a cube of side length $L$. We assume that the electron density on the cube at time $t=0$ is a constant $\rho>0$ (and zero outside this cube). Thus, the total number of particles is $N\sim\rho L^d$.

As $|t|$ increases the electrons disperse and, after a certain time $T$ we consider them as roughly having disjoint supports. For $|t|\geq T$ we can apply the ordinary Strichartz inequality and, because of the disjoint support condition, the left side of~\eqref{eq:Strichartz-orth-fns-N} is of the order of $N^{p/q}$ (by the triangle inequality for the $t$-integration and the fact that $p\geq q$), which is much smaller than what we try to prove. Thus, it remains to compute the order of magnitude of $T$.

We think of $T$ as the typical time it takes an electron to move a distance comparable with the size of the system. Thomas--Fermi theory says that the momentum $p$ per particle is $|p| \sim \rho^{1/d}$. If we assume that the electrons move ballistically, then $T \sim L/|p| \sim L \rho^{-1/d}$. 

Thus, the left side of (5), restricted to times $|t|\leq T$, is $\sim T (L^d \rho^q )^{p/q} \sim L \rho^{-1/d} L^{dp/q}\rho^p$. Because of the scaling condition $2/p+d/q=d$, this coincides with the value $N^{p(q+1)/(2q)}$ of the right side of~\eqref{eq:Strichartz-orth-fns-N}.

\subsection{Dual Strichartz inequality}
In this paper we will not provide a direct proof of Theorem~\ref{thm:version_u}, but we will rather prove an inequality that is \emph{dual} to~\eqref{eq:Strichartz-orth-fns} and which we describe in this section. It is an interesting open problem to provide a direct proof of~\eqref{eq:Strichartz-orth-fns}. For the Lieb-Thirring inequality~\eqref{eq:Lieb-Thirring}, this has been solved only recently by Rumin in~\cite{Rumin-10}.

We recall that for any (locally) trace-class operator $\gamma$ and any bounded function $V$ of compact support,
$$\tr(V(x)\gamma)=\int_{\R^d} V(x)\rho_\gamma(x)\,dx,$$
where $V(x)$ on the left is identified with the corresponding multiplication operator on $L^2(\R^d)$. 
For a time-dependent potential $V(t,x)\in L^\ii_c(\R\times\R^d)$, we therefore obtain
\begin{align*}
\left|\tr\left( \int_\R  e^{-it\Delta}V(t,x)e^{it\Delta}\,dt\right)\gamma\right|&=\left|\int_\R  \tr(Ve^{it\Delta}\gamma e^{-it\Delta})\,dt\right|\\
&=\left|\int_\R \int_{\R^d} V(t,x)\rho_{\gamma(t)}(x)\,dx\,dt\right|\\
&\leq\norm{V}_{L^{p'}_t(\R,L^{q'}_x(\R^d))}\norm{\rho_{\gamma(t)}}_{L^{p}_t(\R,L^{q}_x(\R^d))}
\end{align*}
where $p'$ and $q'$ are the exponents dual to $p$ and $q$. Hence, by duality Theorem~\ref{thm:version_u} turns out to be equivalent to the following 

\begin{theorem}[Strichartz inequality in Schatten spaces, dual version]\label{thm:version_V}
Assume that $p',q',d\geq1$ satisfy
$$ 1+\frac{d}{2}\leq q'<\ii\quad\text{and}\quad \frac{2}{p'}+\frac{d}{q'}=2.$$
We have
\begin{equation}
\boxed{\norm{\int_\R  e^{-it\Delta}V(t,x)e^{it\Delta}\,dt}_{\gS^{2q'}}\leq C_{d,q}\norm{V}_{L^{p'}_t(\R,L^{q'}_x(\R^d))}}
\label{eq:Strichartz-V}
\end{equation}
where $C_{d,q}$ is the same constant as in Theorem~\ref{thm:version_u}.
\end{theorem}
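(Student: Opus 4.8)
The plan is to deduce Theorem~\ref{thm:version_V} by complex interpolation from its two extreme cases. The set $\{(p',q')\colon 2/p'+d/q'=2,\ 1+d/2\le q'<\infty\}$ is a half-open line segment whose endpoints are $(p',q')=(1,\infty)$, where the asserted bound takes values in $\gS^\infty$, and $(p',q')=(1+\tfrac d2,1+\tfrac d2)$, where it takes values in $\gS^{d+2}$. Since the map $V\mapsto\int_\R e^{-it\Delta}V(t,x)e^{it\Delta}\,dt$ is linear, and since both the Schatten classes and the mixed-norm spaces $L^{p'}_t(\R,L^{q'}_x(\R^d))$ form complex interpolation scales, it suffices to establish the estimate at these two endpoints: interpolation with parameter $\theta\in(0,1)$ produces precisely $q'_\theta=(d+2)/(2\theta)$, the relation $2/p'_\theta+d/q'_\theta=2$, and target space $\gS^{2q'_\theta}$, which is the full statement, with a constant controlled by the two endpoint constants. (By density one may carry this out for $V\in C^\infty_c(\R\times\R^d)$ first.) The endpoint $(p',q')=(1,\infty)$ is trivial, since $e^{it\Delta}$ is unitary on $L^2(\R^d)$ for each fixed $t$: $\norm{\int_\R e^{-it\Delta}V(t,\cdot)e^{it\Delta}\,dt}_{\gS^\infty}\le\int_\R\norm{V(t,\cdot)}_{L^\infty(\R^d)}\,dt$, which uses neither orthogonality nor dispersion.

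The heart of the matter is thus the single estimate $\norm{\int_\R e^{-it\Delta}V(t,x)e^{it\Delta}\,dt}_{\gS^{d+2}}\le C_d\,\norm{V}_{L^{1+d/2}(\R\times\R^d)}$. I would first factor $V=\overline{W_1}W_2$ with $|W_1|=|W_2|=|V|^{1/2}$ (say $W_1=|V|^{1/2}$, $W_2=(V/|V|)\,|V|^{1/2}$), and note that $\int_\R e^{-it\Delta}V(t,x)e^{it\Delta}\,dt=\mathcal A_1^*\mathcal A_2$, where $\mathcal A_i\colon L^2(\R^d)\to L^2(\R\times\R^d)$ is given by $(\mathcal A_i f)(t,x)=W_i(t,x)\,(e^{it\Delta}f)(x)$. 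By the H\"older inequality for Schatten norms, $\norm{\mathcal A_1^*\mathcal A_2}_{\gS^{d+2}}\le\norm{\mathcal A_1}_{\gS^{2(d+2)}}\norm{\mathcal A_2}_{\gS^{2(d+2)}}$, and since $\norm{|V|^{1/2}}_{L^{d+2}(\R\times\R^d)}=\norm{V}_{L^{1+d/2}(\R\times\R^d)}^{1/2}$, everything reduces to the single-weight inequality $\norm{W\,e^{it\Delta}}_{\gS^{2(d+2)}}\le C_d\,\norm{W}_{L^{d+2}(\R\times\R^d)}$ for the operator $f\mapsto W(t,x)(e^{it\Delta}f)(x)$. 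The role of the factorization is to make the Schatten exponent $2(d+2)$ an even integer for every $d$, so that this norm can be expressed as a trace of a product.

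To prove that inequality I would write $\norm{W\,e^{it\Delta}}_{\gS^{2(d+2)}}^{2(d+2)}=\tr\big((\mathcal A\mathcal A^*)^{d+2}\big)$ with $\mathcal A=W\,e^{it\Delta}$, so that $\mathcal A\mathcal A^*$ has integral kernel $W(t,x)\,K_{t-s}(x,y)\,\overline{W(s,y)}$ with $K_\tau(x,y)=(4\pi i\tau)^{-d/2}e^{i|x-y|^2/(4\tau)}$ the kernel of $e^{i\tau\Delta}$; then expand the trace into a $(d+2)$-fold integral over space-time, carry out the Gaussian oscillatory integrals in the $d+2$ space variables, and estimate the surviving multilinear integral in the $d+2$ time variables by a Hardy--Littlewood--Sobolev-type inequality tuned to the exponent $1+d/2$. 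Note that the classical Strichartz inequality~\eqref{eq:usual-Strichartz} is not needed as an input here; it reappears as the rank-one case of the final statement.

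The main obstacle is precisely the control of this multilinear oscillatory integral. One is not allowed to replace $|K_\tau(x,y)|$ by $(4\pi|\tau|)^{-d/2}$: that would split the space integrals into a product of $\norm{W(t_k,\cdot)}_{L^2_x}^2$, which is far too large to be dominated by $\norm{W}_{L^{d+2}_{t,x}}$---indeed $\int_\R e^{-it\Delta}Ve^{it\Delta}\,dt$ fails to be Hilbert--Schmidt for $d\ge1$---so the oscillation of the kernel must be used genuinely. I expect the cleanest way to organize this is a second, inner application of Stein's complex interpolation theorem: replace $e^{it\Delta}$ (equivalently, after a space-time Fourier transform, the restriction to the paraboloid that it represents) by an analytic family whose kernel, for $\Re z$ in a suitable half-plane, decays in $(x,y)$ well enough that sandwiching it between multiplications by $W$ gives a Hilbert--Schmidt operator with controllable norm, and then continue analytically to the value of $z$ that returns the $\gS^{d+2}$ bound with the weight in $L^{1+d/2}$. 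Making the exponents line up along that deformation is the delicate bookkeeping; the outer interpolation, the factorization, and the trivial endpoint are all routine once it is in place.
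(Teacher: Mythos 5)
Your outer framework is exactly the paper's: reduce by complex interpolation to the endpoints $(p',q')=(1,\infty)$ and $(p',q')=(1+d/2,1+d/2)$, dispose of the first by unitarity and the triangle inequality, and observe that the second is a $\gS^{d+2}$ estimate. Your $TT^*$ factorization $B_V=\mathcal{A}_1^*\mathcal{A}_2$ is equivalent to the paper's reduction to $V\geq 0$ (indeed $\norm{W\,e^{it\Delta}}_{\gS^{2(d+2)}}^2=\norm{B_{|W|^2}}_{\gS^{d+2}}$, so this step is a reformulation rather than a simplification). You have also correctly diagnosed the obstruction: replacing the free propagator kernel by its modulus destroys the bound, so the oscillation must be used.

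But the argument stops precisely at the hard endpoint, which is the heart of the theorem, and the ``inner'' Stein interpolation you sketch is not a proof: no analytic family is specified, no boundary estimates are produced, and you acknowledge that the exponent bookkeeping is unresolved. The idea you are missing is that one need not confront the full $(d+2)$-fold oscillatory integral at once. Writing $e^{-it\Delta}V(t,x)e^{it\Delta}=V(t,x+2tp)$ with $p=-i\nabla$, expanding $\tr B_V^{d+2}$ as a $(d+2)$-fold time integral of a trace, and applying H\"older's inequality in Schatten spaces to consecutive \emph{pairs} $\sqrt{V(t_j,x+2t_jp)}\,\sqrt{V(t_{j+1},x+2t_{j+1}p)}$, each pair is controlled in $\gS^{d+2}$ by a generalized Kato--Seiler--Simon inequality
\begin{equation*}
\norm{f(\alpha x+\beta p)\,g(\gamma x+\delta p)}_{\gS^r}\leq \frac{\norm{f}_{L^r(\R^d)}\norm{g}_{L^r(\R^d)}}{(2\pi)^{d/r}\,|\alpha\delta-\beta\gamma|^{d/r}},\qquad r\geq 2,
\end{equation*}
proved at $r=2$ by a short explicit Gaussian computation (this is where the oscillation enters, and only for a single pair, where it is elementary) and extended to $r\geq2$ by interpolation with $r=\infty$. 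This converts the operator trace into a scalar time integral against $\prod_j|t_j-t_{j+1}|^{-d/(d+2)}$ times the $L^{1+d/2}_x$ norms of $V(t_j,\cdot)$, which is then closed by the multilinear Hardy--Littlewood--Sobolev inequality in the time variables. Locating such a two-operator Schatten bound is what makes the endpoint tractable, and it is the piece your plan does not supply.
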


\begin{remark}
For $q'=\ii$ and $p'=1$, we have the bound
\begin{equation}
\norm{\int_\R  e^{-it\Delta}V(t,x)e^{it\Delta}\,dt}\leq \norm{V}_{L^{1}_t(\R,L^{\ii}_x(\R^d))}.
\label{eq:Strichartz-V2}
\end{equation}
\end{remark}

The dual version of the usual Strichartz inequality~\eqref{eq:usual-Strichartz} is
\begin{equation}
\norm{\int_\R  e^{-it\Delta}V(t,x)e^{it\Delta}\,dt}\leq C\norm{V}_{L^{p'}_t(\R,L^{q'}_x(\R^d))}.
\label{eq:usual-Strichartz-dual}
\end{equation}
The replacement of the operator norm on the left by the Schatten norm $\gS^{2q'}$ for $q'<\ii$ is our main contribution. Of course, since the Schatten spaces form an increasing sequence, we deduce that 
\begin{equation}
\norm{\int_\R  e^{-it\Delta}V(t,x)e^{it\Delta}\,dt}_{\gS^{r}}\leq C\norm{V}_{L^{p'}_t(\R,L^{q'}_x(\R^d))},\qquad \forall r\geq 2q'.
\label{eq:Strichartz-V-all-r}
\end{equation}

\medskip

Using~\eqref{eq:Strichartz-V}, we are also able to prove an inhomogeneous inequality. Consider the equation
\begin{equation}
\begin{cases}
i\dot\gamma(t)=[-\Delta,\gamma(t)]+iR(t)\\[0.2cm]
\gamma(t_0)=0
\end{cases}
\label{eq:Strichartz-inhomogeneous}
\end{equation}
where $R(t)$ is a self-adjoint operator on $L^2(\R^d)$ which, say, is bounded for almost every $t$. The solution can be written as 
\begin{equation}
\gamma(t)=\int_{t_0}^t e^{i(t-s)\Delta}R(s)e^{i(s-t)\Delta}\,ds.
\label{eq:gamma_inhomogeneous}
\end{equation}

\begin{corollary}[Inhomogeneous Strichartz inequality]\label{cor:Strichartz_inhomogeneous}
Assume that $p,q,d\geq1$ satisfy
$$1< q\leq 1+\frac{2}{d}\quad\text{and}\quad \frac{2}{p}+\frac{d}{q}=d$$
and let $\gamma(t)$ be given by~\eqref{eq:gamma_inhomogeneous}. Then we have
\begin{equation}
\norm{\rho_{\gamma(t)}}_{L^{p}_t(\R,L^q_x(\R^d))}\leq C\norm{\int_\R e^{-is\Delta}|R(s)|e^{is\Delta}\,ds}_{\gS^{\frac{2q}{q+1}}}
\label{eq:Strichartz_inhomogeneous}
\end{equation}
for a constant $C$ which is independent of $t_0$.
\end{corollary}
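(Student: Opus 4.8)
The plan is to derive \eqref{eq:Strichartz_inhomogeneous} directly from the homogeneous estimate \eqref{eq:Strichartz-Schatten} of Theorem~\ref{thm:version_u} (equivalently, from \eqref{eq:Strichartz-V}), letting the positivity-preserving character of the density map $\gamma\mapsto\rho_\gamma$ do all the work. In the classical inhomogeneous Strichartz estimate the retarded truncation $\int_{t_0}^t$ in \eqref{eq:gamma_inhomogeneous} is removed via the Christ--Kiselev lemma; that route is unattractive here, since the right-hand side of \eqref{eq:Strichartz_inhomogeneous} is the Schatten norm of an integral over $s$ rather than an $L^a_s$-type norm of $R(\cdot)$. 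Instead I would dispose of the truncation by a pointwise comparison of densities.

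First, write $R(s)=R_+(s)-R_-(s)$ with $R_\pm(s)\geq0$, so that by linearity of $\gamma\mapsto\rho_\gamma$ one has $\rho_{\gamma(t)}=\rho_{\gamma_+(t)}-\rho_{\gamma_-(t)}$, where $\gamma_\pm(t):=\int_{t_0}^t e^{i(t-s)\Delta}R_\pm(s)e^{i(s-t)\Delta}\,ds$; by the triangle inequality in $L^p_t(\R,L^q_x(\R^d))$ it suffices to estimate $\gamma_+(t)$ and $\gamma_-(t)$ separately. Consider $\gamma_+(t)$. Since $R_+(s)\geq0$, the operator $e^{i(t-s)\Delta}R_+(s)e^{i(s-t)\Delta}$ is nonnegative, so its density $h(x,t,s)$ is nonnegative for a.e.\ $x$; and, again by linearity and continuity of the density, $\rho_{\gamma_+(t)}(x)=\int_{t_0}^t h(x,t,s)\,ds$. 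Because $h\geq0$, this yields the pointwise bound $|\rho_{\gamma_+(t)}(x)|\leq\int_\R h(x,t,s)\,ds=\rho_{e^{it\Delta}\gamma_+e^{-it\Delta}}(x)$, with the fixed operator $\gamma_+:=\int_\R e^{-is\Delta}R_+(s)e^{is\Delta}\,ds\geq0$. All dependence on $t_0$ has now disappeared, and applying \eqref{eq:Strichartz-Schatten} with $\gamma$ replaced by $\gamma_+$ gives $\norm{\rho_{\gamma_+(t)}}_{L^p_t(\R,L^q_x(\R^d))}\leq C_{d,q}\norm{\gamma_+}_{\gS^{\frac{2q}{q+1}}}$. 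Finally $0\leq R_+(s)\leq|R(s)|$ implies, after conjugation by $e^{-is\Delta}$ and integration, $0\leq\gamma_+\leq\int_\R e^{-is\Delta}|R(s)|e^{is\Delta}\,ds$, whence $\norm{\gamma_+}_{\gS^{\frac{2q}{q+1}}}\leq\norm{\int_\R e^{-is\Delta}|R(s)|e^{is\Delta}\,ds}_{\gS^{\frac{2q}{q+1}}}$ by monotonicity of Schatten norms on nonnegative operators. The identical estimate for $\gamma_-(t)$, added to this one, proves \eqref{eq:Strichartz_inhomogeneous} with $C=2C_{d,q}$, a constant evidently independent of $t_0$.

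The factor $2$ is merely an artefact of the decomposition of $R$: alternatively one observes that $-A\leq\int_{t_0}^t e^{-is\Delta}R(s)e^{is\Delta}\,ds\leq A$ with $A:=\int_\R e^{-is\Delta}|R(s)|e^{is\Delta}\,ds\geq0$, conjugates by $e^{it\Delta}$, and uses the elementary fact that $-B\leq\gamma\leq B$ with $B\geq0$ forces $|\rho_\gamma|\leq\rho_B$. I do not foresee a genuine obstacle; the one delicate point is the rigorous meaning of the identity $\rho_{\gamma_+(t)}(x)=\int_{t_0}^t h(x,t,s)\,ds$ together with the measurability of $s\mapsto R_\pm(s)$ and the finiteness of all quantities, which, if necessary, is handled by first proving the bound for $R$ in a convenient dense class (e.g.\ $R\in C_c(\R;\gS^1)$) and then passing to the limit. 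Apart from Theorem~\ref{thm:version_u}, the only ingredients are the linearity and positivity preservation of $\gamma\mapsto\rho_\gamma$ and the monotonicity of Schatten norms.
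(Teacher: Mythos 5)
Your proof is correct and takes a genuinely different route from the paper's. The paper argues by duality: it pairs $\rho_{\gamma(t)}$ against a test potential $V$, rewrites the pairing as a double time-integral of a trace $\tr\big(e^{-it\Delta}V(t,x)e^{it\Delta}\,e^{-is\Delta}R(s)e^{is\Delta}\big)$, applies the scalar bound $|\tr(AB)|\leq\tr(|A|\,|B|)$ for self-adjoint $A,B$, extends the integration domain from the ordered simplex to the full square (permissible because the integrand is now nonnegative), and finishes with H\"older in Schatten spaces together with the dual estimate \eqref{eq:Strichartz-V}. You instead work entirely on the primal side: you sandwich the operator, $-e^{it\Delta}Ae^{-it\Delta}\leq\gamma(t)\leq e^{it\Delta}Ae^{-it\Delta}$ with $A:=\int_\R e^{-is\Delta}|R(s)|e^{is\Delta}\,ds\geq0$, use the fact that the density map sends nonnegative operators to nonnegative densities to obtain the pointwise bound $|\rho_{\gamma(t)}(x)|\leq\rho_{e^{it\Delta}Ae^{-it\Delta}}(x)$, and then apply the homogeneous estimate \eqref{eq:Strichartz-Schatten} directly to $A$. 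Both proofs ultimately discard the time truncation $\int_{t_0}^t$ by the same underlying positivity device (enlarge an integral whose integrand has a definite sign); the difference is whether the positivity is exploited at the level of the trace pairing or at the level of densities. Your version is arguably more transparent and gives the constant $C=C_{d,q}$ on the nose, at the small extra cost of invoking the min--max monotonicity of Schatten norms under the operator order $0\leq A\leq B$, an ingredient the paper's duality argument does not need. One stylistic remark: the $R=R_+-R_-$ decomposition in your first paragraph is superfluous and costs a factor $2$; the sandwich argument of your last paragraph already covers the general self-adjoint $R$ directly and should simply replace it.
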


The proof of Corollary~\ref{cor:Strichartz_inhomogeneous} is again based on a dual argument. The idea is to write
\begin{align*}
&\left| \int_{t_0}^\ii  \int_{\R^d} V(t,x)\rho_{\gamma(t)}(x)\,dx\,dt\right|\\
&\qquad\qquad=\left|\int_{t_0}^\ii  \tr\big(V(t,x)\gamma(t)\big)\,dt\right|\\
&\qquad\qquad=\left|\int_{t_0}^\ii \int_{t_0}^t \tr\big(e^{-it\Delta}V(t,x)e^{it\Delta}e^{-is\Delta}R(s)e^{is\Delta}\big)\,ds\,dt\right|\\
&\qquad\qquad\leq\int_{t_0}^\ii \int_{t_0}^t \tr\big(e^{-it\Delta}|V(t,x)|e^{it\Delta}e^{-is\Delta}|R(s)|e^{is\Delta}\big)\,ds\,dt\\
&\qquad\qquad\leq \tr\left(\left(\int_{t_0}^\ii e^{-it\Delta}|V(t,x)|e^{it\Delta}\,dt\right)\left(\int_{t_0}^\ii e^{-is\Delta}|R(s)|e^{is\Delta}\,ds\right)\right).
\end{align*}
In the first inequality we have used that $|\tr(AB)|\leq \tr(|A|\,|B|)$ for all self-adjoint operators $A$ and $B$. It then remains to use Hölder's inequality for traces and~\eqref{eq:Strichartz-V} for the term involving $V(t,x)$. The argument is the same for times $t\leq t_0$.

\subsection{The end point}\label{sec:end-point}

We believe that our Strichartz inequality~\eqref{eq:Strichartz-orth-fns} is true for all 
\begin{equation}
1\leq q<\frac{d+1}{d-1}
\label{eq:optimal}
\end{equation}
but so far we are missing the result in the interval $1+2/d<q<(d+1)/(d-1)$. This corresponds to the range $(d+1)/2<q'<1+d/2$ for the dual inequality~\eqref{eq:Strichartz-V}.

We can prove that the operator $\int_\R e^{-it\Delta}V(t,x)e^{it\Delta}\,dt$ is never in the Schatten space $\gS^{d+1}$, even when $V$ has a fast decay in space and time. This means that the Strichartz inequality~\eqref{eq:Strichartz-V} cannot hold at $p'=d+1$ and $q'=(d+1)/2$, and that the condition~\eqref{eq:optimal} is necessary. 

\begin{proposition}[The end point]\label{prop:end-point}
Let $0\neq V\in L^\ii_c(\R\times\R^d)$ be a non-negative function with non-negative Fourier transform (in both space and time). Then
\begin{equation}
\tr\left(\int_\R e^{-it\Delta}V(t,x)e^{it\Delta}\,dt\right)^{d+1}=+\ii.
\end{equation}
\end{proposition}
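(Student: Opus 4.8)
The plan is to turn $\tr A^{d+1}$, where $A:=\int_\R e^{-it\Delta}V(t,x)e^{it\Delta}\,dt$, into an explicit oscillatory integral over momenta and then to bound that integral from below by keeping only a thin tube around the diagonal. First I would observe that $V\ge0$ makes each $e^{-it\Delta}V(t,\cdot)e^{it\Delta}$ a non-negative operator, so $A\ge0$ and $\tr A^{d+1}\in[0,+\infty]$ is unambiguously defined. Next, a computation in the Fourier (momentum) representation---Fourier transforming $V$ in both $t$ and $x$ and carrying out the $t$-integration, which produces a Dirac delta---shows that, with a suitable normalization, $A$ acts through the kernel $\pscal{\xi|A|\eta}=\widehat V(|\eta|^2-|\xi|^2,\ \xi-\eta)$, where $\widehat V$ denotes the space-time Fourier transform of $V$. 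Iterating this kernel around a loop and using cyclicity of the trace then gives
\begin{equation*}
\tr A^{d+1}=c_d\int_{(\R^d)^{d+1}}\prod_{j=1}^{d+1}\widehat V\big(|\xi_{j+1}|^2-|\xi_j|^2,\ \xi_j-\xi_{j+1}\big)\,d\xi_1\cdots d\xi_{d+1},
\end{equation*}
where $\xi_{d+2}:=\xi_1$ and $c_d>0$ is a constant. All integrands here are non-negative, since $\widehat V\ge0$ by hypothesis, and both sides lie in $[0,+\infty]$; the only delicate point---because $A^{d+1}$ need not be trace class---is that $\tr A^{d+1}$ equals the integral of the diagonal of its kernel, which I would justify by a routine momentum cut-off together with monotone convergence (this is standard for positive operators).

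With this formula at hand I would choose the region of integration. Since $0\neq V\ge0$ is integrable, $\widehat V$ is continuous with $\widehat V(0,0)=\norm{V}_{L^1(\R\times\R^d)}>0$; I fix $\epsilon\in(0,\tfrac14]$ with $\widehat V(\tau,\xi)\ge\tfrac12\widehat V(0,0)$ whenever $|\tau|\le\epsilon$ and $|\xi|\le\epsilon$, and I restrict the integral to the set
\begin{equation*}
G:=\Big\{(\xi_1,\dots,\xi_{d+1})\in(\R^d)^{d+1}\ :\ |\xi_j-\xi_{j+1}|\le\epsilon\ \text{ and }\ \big||\xi_{j+1}|^2-|\xi_j|^2\big|\le\epsilon\ \text{ for all }j\Big\}.
\end{equation*}
On $G$ every one of the $d+1$ factors of the integrand is $\ge\tfrac12\widehat V(0,0)$, so $\tr A^{d+1}\ge c_d\big(\tfrac12\widehat V(0,0)\big)^{d+1}|G|$, and it remains to prove that $G$ has infinite Lebesgue measure. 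The role of the constraints $\big||\xi_{j+1}|^2-|\xi_j|^2\big|\le\epsilon$ is to pin the momenta to almost a common sphere; going once around the loop, this radial pinning costs a factor $|\xi|^{-1}$ at each of the $d$ independent steps, which is exactly what makes the volume diverge---and only logarithmically, reflecting that $\gS^{d+1}$ is the borderline space.

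To make this quantitative I would change variables $(\xi_1,\dots,\xi_{d+1})\mapsto(\xi,w_1,\dots,w_d)$ with $\xi:=\xi_{d+1}$ and $w_j:=\xi_j-\xi_{j+1}$; this map is volume preserving and $\xi_j=\xi+w_j+\dots+w_d$. A short computation (using $\epsilon\le\tfrac14$) shows that $|w_j|\le\epsilon/d$ and $|\xi\cdot w_j|\le\epsilon/(4d)$ for all $j$ already imply $(\xi_1,\dots,\xi_{d+1})\in G$, and since these reduced constraints decouple in $j$, Fubini gives
\begin{equation*}
|G|\ \ge\ \int_{\R^d}\Big(\big|\{w\in\R^d:\ |w|\le\epsilon/d,\ |\xi\cdot w|\le\epsilon/(4d)\}\big|\Big)^{d}\,d\xi .
\end{equation*}
For $|\xi|\ge1$ the inner set contains a cylinder of half-height $\epsilon/(4d|\xi|)$ in the direction $\xi/|\xi|$ and of cross-sectional radius $\epsilon/(2d)$, so its measure is $\ge C_{d,\epsilon}\,|\xi|^{-1}$ for some $C_{d,\epsilon}>0$; hence
\begin{equation*}
|G|\ \ge\ C_{d,\epsilon}^{\,d}\int_{|\xi|\ge1}|\xi|^{-d}\,d\xi\ =\ C_{d,\epsilon}^{\,d}\,|\mathbb S^{d-1}|\int_1^\infty\frac{dr}{r}\ =\ +\infty .
\end{equation*}
This gives $\tr A^{d+1}=+\infty$, so that $A\notin\gS^{d+1}$ and~\eqref{eq:Strichartz-V} must indeed fail at $p'=d+1$, $q'=(d+1)/2$, as claimed in Section~\ref{sec:end-point}. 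The trace--integral identity is the only genuinely technical ingredient and is standard; the real content is the volume bound for $G$, and the step I would expect to be most important to get right is recognizing that the ``stay on the sphere'' constraint is what produces exactly a (logarithmically) divergent volume.
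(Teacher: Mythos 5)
Your proposal is correct, but it diverges from the paper's proof after the initial Fourier-space reduction, and the divergence is substantial enough to be worth noting. You and the paper both start by computing the momentum-space kernel $\widehat V(|\eta|^2-|\xi|^2,\,\xi-\eta)$ (up to a harmless sign convention) and expressing $\tr A^{d+1}$ as a cyclic $d(d+1)$-dimensional integral with a manifestly non-negative integrand. From there the paper performs a chain of changes of variables (to $k_j=p_{j-1}-p_j$, $u=p+p_d$, then $v=K^{-1}u$, then a shift) that factorizes the integrand as $\prod_j\widehat V(w_j,k_j)\cdot|\det K|^{-1}$, writes $|\det K|^{-1}=(|k_1|\cdots|k_d|)^{-1}|\det(\omega_1,\dots,\omega_d)|^{-1}$, and reduces the divergence to the known non-integrability of $|\det(\omega_1,\dots,\omega_d)|^{-1}$ over $(\mathbb S^{d-1})^d$, which it cites from the literature and verifies by hand for $d=3$. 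Your route instead keeps the original momentum variables, restricts to a neighbourhood of the diagonal where every factor $\widehat V$ is bounded below by $\tfrac12\widehat V(0,0)$, linearizes the radial constraints as near-orthogonality $|\xi\cdot w_j|\lesssim\epsilon$, and estimates the volume directly by cylinders to obtain $\int_{|\xi|\ge1}|\xi|^{-d}\,d\xi=+\infty$. Your volume count checks out (the bound $\epsilon\le\tfrac14$ indeed suffices to make the reduced constraints imply all $2(d+1)$ cyclic constraints, and the map $(\xi_1,\dots,\xi_{d+1})\mapsto(\xi,w_1,\dots,w_d)$ is unimodular), and your argument has the advantage of being self-contained: it neither invokes nor proves the non-integrability of $|\det(\omega_1,\dots,\omega_d)|^{-1}$, and it makes the logarithmic rate of divergence visible as $\int_1^\infty dr/r$. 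The paper's route, by contrast, exhibits more of the structure of the integral (full factorization of the $\widehat V$ arguments) and connects the endpoint failure to a standard geometric fact; geometrically the two divergences are the same, since small nearly-coplanar $k_j$ in the paper's coordinates correspond to the large-$|\xi|$ tail in yours via $u\mapsto K^{-1}u$. The only technical point you gloss over, the identity between $\tr A^{d+1}$ and the cyclic kernel integral when both sides may be $+\infty$, is exactly as delicate as in the paper's proof (which also does not dwell on it) and is handled by the momentum cut-off plus monotone convergence argument you sketch.
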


We find logarithmically divergent integrals at $(p',q')=(d+1,(d+1)/2)$, which suggests that the operator $\int_\R e^{-it\Delta}V(t,x)e^{it\Delta}\,dt$ is in the weak Schatten space $\gS_{\rm w}^{d+1}$ when $V\in L^{d+1}_t(\R,L_x^{(d+1)/2}(\R^d))$. If true, this would imply the bound
$$\norm{\rho_{\gamma(t)}}_{L^{1+1/d}_t(\R,L^{(d+1)/(d-1)}_x(\R^d))}\leq C\norm{\gamma}_{\gS^{r}},\quad\forall 1\leq r<1+\frac{1}{d}.$$
This estimate would follow from the bound~\eqref{eq:Strichartz-Schatten} if it were true at the end point $(p,q)=(1+1/d,(d+1)/(d-1))$, and it is therefore weaker than~\eqref{eq:Strichartz-Schatten}.

%%%%%%%%%%%%%%%%%%%%%%%%%%%%%%%%%%%%
\section{Application: the Schrödinger wave operator for time-dependent potentials}\label{sec:wave}
%%%%%%%%%%%%%%%%%%%%%%%%%%%%%%%%%%%%

In this section we consider the wave operator for a time-dependent potential $V(t,x)$. Using our previous estimates we will be able to define it in Schatten spaces.

Let $V(t,x)\in L^{p'}_t(\R,L^{q'}_x(\R^d))$ with $p'$ and $q'$ as in Theorem~\ref{thm:version_V}. We consider the time-dependent Schrödinger equation
\begin{equation}
\left\{
\begin{array}{rcl}
\displaystyle i\,\frac{\partial}{\partial t}u(t,x) &=& \big(-\Delta+V(t,x)\big)u(t,x)\\[0.2cm]
u(t_0,x)&=&u_0(x)
\end{array}\right.
\label{eq:TD-Schrodinger} 
\end{equation}
and we define the associated unitary propagator $U_V(t,t_0)$, which is such that 
$$
\left\{
\begin{array}{rcl}
\displaystyle i\frac{\partial}{\partial t}U_V(t,t_0)&=&\big(-\Delta+V(t,x)\big)U_V(t,t_0)\\[0.3cm]
U_V(t_0,t_0)&=&1.
\end{array}\right.
$$
Therefore, the unique solution to the time-dependent Schrödinger equation~\eqref{eq:TD-Schrodinger} can be written $u(t)=U_V(t,t_0)u_0$. The proof that $U_V(t,t_0)$ is well-defined under our assumptions on $V$ can be found for instance in~\cite{Yajima-87}.

The wave operator is defined by
\begin{equation}
\cW_V(t,t_0):=U_0(t_0,t)U_V(t,t_0)=e^{i(t_0-t)\Delta}U_V(t,t_0) 
\end{equation}
and it solves the equation in the ``interaction picture''
\begin{equation}
\left\{
\begin{array}{rcl}
\displaystyle i\frac{\partial}{\partial t}\cW_V(t,t_0)&=&e^{i(t_0-t)\Delta}V(t,x)e^{i(t-t_0)\Delta}\; \cW_V(t,t_0)\\
\cW_V(t_0,t_0)&=&1.
\end{array}\right.
\label{eq:interaction_picture} 
\end{equation}
The unique solution to~\eqref{eq:interaction_picture} can be written in a Dyson series as
\begin{equation}
\cW_V(t,t_0)=1+\sum_{n\geq1}\cW_V^{(n)}(t,t_0),
\label{eq:Dyson_series} 
\end{equation}
where the $n$th order is the operator
\begin{multline}
\cW_V^{(n)}(t,t_0):=(-i)^n \int_{t_0}^{t}dt_n\int_{t_0}^{t_n}dt_{n-1}\cdots \int_{t_0}^{t_{2}}dt_1\;e^{i(t_0-t_n)\Delta}\\ V\big(t_{n},x\big)e^{i(t_n-t_{n-1})\Delta}\cdots e^{i(t_2-t_1)\Delta}V\big(t_1,x\big)e^{i(t_1-t_0)\Delta}.
\end{multline}
Note, in particular, that the first order is
$$\cW_V^{(1)}(t,t_0):=-i\int_{t_0}^{t}e^{i(t_0-s)\Delta}V\big(s,x\big)e^{i(s-t_0)\Delta}\,ds,$$
which we have already estimated in Theorem~\ref{thm:version_V}. It admits a limit as $t\to\pm\ii$ in the Schatten space $\gS^{2q'}$ when $V\in L^{p'}_t(\R,L^{q'}_x(\R^d))$. 

Taking $t\to\pm\ii$ in all the terms leads to the (formal) wave operator
\begin{equation}
\cW_{V,\pm}(t_0)=1+\sum_{n\geq1}\cW_{V,\pm}^{(n)}(t_0)
\label{eq:Dyson_series2} 
\end{equation}
with 
\begin{multline}
\cW_{V,\pm}^{(n)}(t_0):=(-i)^n \int_{t_0}^{\pm\ii}dt_n\int_{t_0}^{t_n}dt_{n-1}\cdots \int_{t_0}^{t_{2}}dt_1\;e^{i(t_0-t_n)\Delta}\\ V\big(t_{n},x\big)e^{i(t_n-t_{n-1})\Delta}\cdots e^{i(t_2-t_1)\Delta}V\big(t_1,x\big)e^{i(t_1-t_0)\Delta}.
\end{multline}
The finite-time wave operators $\cW^{(n)}_V(t,t_0)$ can be recovered by taking a potential $V$ of compact support in time. The series~\eqref{eq:Dyson_series2} defining $\cW_{V,\pm}$ is known to converge in the operator norm when $V\in L^1_t(L^\ii_x)$, see~\cite[Prop. 2.2]{Jensen-94}. Simply, we have in this case
\begin{align}
\norm{\cW_{V,+}^{(n)}(t_0)}&\leq \int_{t_0}^{\ii}dt_n\int_{t_0}^{t_n}dt_{n-1}\cdots \int_{t_0}^{t_{2}}dt_1 \norm{V\big(t_{n},\cdot)}_{L^\ii_x}\cdots\norm{V\big(t_1,\cdot)}_{L^\ii_x}\nonumber\\
&=\frac{1}{n!}\norm{V}_{L^1(t_0,\ii;L^\ii_x)}^n,\label{eq:estim_wave_simple}
\end{align}
which proves that the series~\eqref{eq:Dyson_series2} has an infinite radius of convergence. In particular there is no size condition on $\norm{V}_{L^1_t(L^\ii_x)}$. The argument is the same for $\cW_{V,-}^{(n)}(t_0)$. The existence of the wave operators for time-dependent potentials $V\in L^{p'}_t(L^{q'}_x)$ has been discussed in several works, including for instance~\cite{Howland-74,Yajima-87,Yajima-91,Jensen-94,Jensen-98,RodSch-04,AncPieVis-05,Pierfelice-06,NaiSte-06}. 

Our main result is a control of the Schatten norm of $\cW_{V,\pm}^{(n)}(t_0)$ in terms of the $L^{p'}_t(L^{q'}_x)$ norm of the potential $V$, which generalizes the operator norm bound~\eqref{eq:estim_wave_simple}. It makes the series~\eqref{eq:Dyson_series} convergent in Schatten spaces, independently of the size of the norm of $V$.

\begin{theorem}[Wave operator in Schatten spaces]\label{thm:wave-op}
For $V\in L^{p'}_t(\R,L^{q'}_x(\R^d))$ with $p'$ and $q'$ as in Theorem~\ref{thm:version_V}, we have
\begin{equation}
\boxed{\norm{\cW_{V,\pm}^{(n)}(t_0)}_{\gS^{2\left\lceil\frac{q'}{n}\right\rceil}}\leq \frac{C^n}{(n!)^{\frac{1}{p'}-\epsilon}}\norm{V}^n_{L^{p'}_t(\R,L^{q'}_x(\R^d))}}
\end{equation}
for every $\varepsilon>0$, $n\geq2$, $t_0\in\R$, and some constant $C$ which only depends on $\epsilon,d,q'$. In particular, the map 
$$V\in L^{p'}_t(\R,L^{q'}_x(\R^d))\mapsto \cW_{V,\pm}(t_0)-1\in\gS^{2q'}$$
is a smooth function.
\end{theorem}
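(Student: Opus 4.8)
The plan is to reduce the bound to a time-ordered operator integral, to use Theorem~\ref{thm:version_V} as a building block on subintervals of time, and to recombine the pieces by a multilinear Christ--Kiselev argument, which is the source of the factorial gain.

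\emph{Reduction.} Conjugating by the unitary $e^{it_0\Delta}$ (which changes no Schatten norm) and translating in $t$, the estimate is equivalent to controlling $\big\|\int_{0<t_1<\dots<t_n}B(t_n)B(t_{n-1})\cdots B(t_1)\,dt\big\|_{\gS^{2\lceil q'/n\rceil}}$, together with the mirror-image integral over $\{0>t_1>\dots>t_n\}$, where $B(s):=e^{-is\Delta}V(s,\cdot)e^{is\Delta}$ and $\|V\|_{L^{p'}_t(L^{q'}_x)}$ is unchanged. For any time-interval $I$, the operator $\int_I B(s)\,ds$ is precisely the first-order wave operator built from $V\1_I$, so Theorem~\ref{thm:version_V} gives the building-block bound $\big\|\int_I B(s)\,ds\big\|_{\gS^{2q'}}\leq C_{d,q}\,\|V\1_I\|_{L^{p'}_t(L^{q'}_x)}$, and these quantities are $\ell^{p'}$-superadditive in $I$ over disjoint intervals. (For $n=1$, where $1!=1$, this is Theorem~\ref{thm:version_V} itself.)

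\emph{Decomposition.} Introduce the time-mass measure $\mu(ds):=\|V(s,\cdot)\|_{L^{q'}_x}^{p'}\,ds$ and the corresponding dyadic intervals on $\R_+$ (pulled back from the dyadic intervals of $[0,\mu(\R_+)]$ by $t\mapsto\mu((0,t))$). Splitting the simplex $\{0<t_1<\dots<t_n\}$ according to the coarsest dyadic interval that separates the variables, and iterating, expresses the integral as a sum --- indexed by binary trees embedded in the dyadic tree --- of operator products $\big(\int_{K_n}B\big)\cdots\big(\int_{K_1}B\big)$ over pairwise disjoint dyadic intervals $K_1<\dots<K_n$; the crucial point is that the ordering of the $K_i$ on the line makes the operator product automatically respect the time-ordering, so it genuinely factorizes. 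H\"older's inequality for Schatten norms bounds each such product in $\gS^{2q'/n}\subseteq\gS^{2\lceil q'/n\rceil}$ by the product of the $\gS^{2q'}$-norms of the factors, so by the building-block bound the whole left side is at most $C_{d,q}^n$ times the combinatorial sum $\sum_{\text{trees}}\prod_{i=1}^n\mu(K_i)^{1/p'}$.

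\emph{Summation --- the main obstacle.} It remains to show this combinatorial sum is $\leq C^n/(n!)^{1/p'-\epsilon}$. This is the multilinear form of the Christ--Kiselev lemma: at each branching the ambient $\mu$-mass is halved, which forces the leaf-masses to be small, and the $\ell^{p'}$-orthogonality of the building blocks over the disjoint leaf-intervals --- available precisely because $p'<\infty$ --- turns the sum over dyadic generations into geometric series whose product over the $n$ levels produces the factorial, with the $\epsilon$ absorbing the geometric-summation constants. The bookkeeping is the delicate part: single-variable branches must be handled as in the scalar Christ--Kiselev lemma (the variable simply ranges over the inherited dyadic interval rather than being recursed), when $n>2q'$ one should run the argument directly in $\gS^2=\gS^{2\lceil q'/n\rceil}$ to avoid Schatten quasi-norms, and atoms of $\mu$ require an approximation argument. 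Finally, the bound for all $n$ makes $\sum_{n\geq1}\cW_{V,\pm}^{(n)}(t_0)$ converge in $\gS^{2q'}$ (using $\gS^{2\lceil q'/n\rceil}\subseteq\gS^{2q'}$), for \emph{every} $V$ since $\sum_n C^n/(n!)^{1/p'-\epsilon}<\infty$; combined with the multilinearity of $V\mapsto\cW_{V,\pm}^{(n)}(t_0)$, this is a convergent power series, which yields the asserted smoothness of $V\mapsto\cW_{V,\pm}(t_0)-1\in\gS^{2q'}$.
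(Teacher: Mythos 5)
Your approach is genuinely different from the paper's. The paper computes $\|\cW^{(n)}_{V,\pm}(t_0)\|_{\gS^2}^2$ directly as a $2n$-fold time integral, bounds the integrand via the generalized Kato--Seiler--Simon inequality (Lemma~\ref{lem:gKSS}), and then splits $v(t)=\|V(t,\cdot)\|_{L^{q'}_x}$ as $v^{\theta}v^{1-\theta}$ and applies H\"older in the time variables with exponents $\alpha,\alpha'$; the $v^{\theta\alpha}$ factor integrated over the two time-ordered simplices gives the factor $(n!)^{-2}$ (since $\theta\alpha=p'$ makes $v^{\theta\alpha}\in L^1_t$), while the $v^{(1-\theta)\alpha'}$ factor together with the Hardy--Littlewood--Sobolev singular kernel is recast as a Schatten norm in one time variable and controlled by Cwikel's inequality. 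You instead treat Theorem~\ref{thm:version_V} as a black box and try to recombine by a multilinear Christ--Kiselev decomposition in time.

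The difficulty is that your crucial ``summation'' step --- the assertion that the sum over trees and dyadic configurations is $\leq C^n(n!)^{-(1/p'-\epsilon)}$ --- is not proven, and in fact it \emph{fails} for $p'\geq 2$, which is part of the allowed range when $d\geq 2$. Concretely, already at $n=2$ the Christ--Kiselev sum is, after pulling back $\mu$ to the uniform measure on $[0,M]$ with $M=\|V\|_{L^{p'}_t(L^{q'}_x)}^{p'}$,
\begin{equation*}
\sum_{I\ \text{dyadic}}\mu(I_-)^{1/p'}\mu(I_+)^{1/p'}
= \sum_{j\geq 0}\ \sum_{I\in D_j}\big(M2^{-j-1}\big)^{2/p'}
= M^{2/p'}2^{-2/p'}\sum_{j\geq 0}2^{j(1-2/p')},
\end{equation*}
which is a geometric series precisely when $p'<2$ and diverges for $p'\geq 2$. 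Since $p'=\frac{2q'}{2q'-d}\geq 2$ iff $q'\leq d$, and the theorem's hypothesis allows $q'\geq 1+d/2$, the interval $1+d/2\leq q'\leq d$ (nonempty for $d\geq 2$, and containing the corner exponent $q'=1+d/2$ where $p'=1+d/2\geq 2$) falls outside the reach of your method. The statement ``available precisely because $p'<\infty$'' therefore has the wrong threshold; the correct requirement for your geometric series to close is $p'<2$. The issue is that in a Schatten-valued sum over trees you only have the triangle inequality at your disposal, so the domain-side $\ell^{p'}$ structure of $\|V\1_I\|_{L^{p'}_t}$ cannot be transferred to the target the way it can for the scalar Christ--Kiselev lemma, where the \emph{codomain} also has an $\ell^q$ structure and the gap $p<q$ produces the decay. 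For $p'<2$ (for example $d=1$) a recursive analysis of the Christ--Kiselev sum does produce $C^n(n!)^{-1/p'}$, so the idea is not hopeless in that regime, but you have not written out that recursion, and the approach cannot cover the full statement of the theorem. The paper's proof sidesteps this entirely: the factorial comes from a bona fide scalar $L^1$ integral over the simplex, which works for every $p'>1$, and Cwikel's inequality absorbs the operator-theoretic part.
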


Since $2\lceil{q'}/{n}\rceil\leq 2q'$ for all $q'\geq 1+d/2$ and all $n\geq2$, it follows from Theorems~\ref{thm:version_V} and~\ref{thm:wave-op} that the scattering matrix
$$S_V(t_0)=\cW_{V,+}(t_0)\,\cW_{V,-}(t_0)^*$$
belongs to $1+\gS^{2q'}$, under our assumptions on the time-dependent potential $V(t,x)$. It is a smooth function of $V$ in the space  $L^{p'}_t(\R,L^{q'}_x(\R^d))$.

\section{Proofs}

%%%%%%%%%%%%%%%%%%%%%%%%%%%%%%%%%%%%
\subsection{Proof of Theorems~\ref{thm:version_u} and~\ref{thm:version_V}: the main inequality}
%%%%%%%%%%%%%%%%%%%%%%%%%%%%%%%%%%%%

The duality argument showing that Theorem~\ref{thm:version_u} is equivalent to Theorem~\ref{thm:version_V} has already been sketched before and we leave the details to the reader. All the manipulations can be justified by assuming first that $V\in L^\ii_c(\R\times\R^d)$ and that $\gamma$ is finite rank.

We have to show that the operator 
$$V\in L^{p'}_t(\R,L^{q'}_x(\R^d))\mapsto \int_\R  e^{-it\Delta} V(t,x) e^{it\Delta}\,dt\in \gS^{2q'}$$
is bounded. By the complex interpolation method~\cite[Chap. 4]{BerLof-76}, it is sufficient to prove this fact at the two points $(p',q')=(1,\ii)$ and $(p',q')=(1+d/2,1+d/2)$.

For $(p',q')=(1,\ii)$, the argument is well-known. We simply bound the operator norm by
\begin{align*}
\norm{\int_\R  e^{-it\Delta} V(t,x) e^{it\Delta}\,dt}&\leq \int_\R  \norm{e^{-it\Delta} V(t,x) e^{it\Delta}}\,dt\\
&=\int_\R  \norm{V(t,\cdot)}_{L^\ii(\R^d)}\,dt 
\end{align*}
which is the desired estimate.

Let us turn to the case $p'=q'=1+d/2$. Without any loss of generality, we may assume that $V\geq0$. We then have $e^{-it\Delta} V(t,x) e^{it\Delta}\geq0$ as an operator on $L^2(\R^d)$, for all $t\in\R$. We can also assume that $V\in L^\ii_c(\R\times \R^d)$ (the final estimate follows from a monotone convergence argument).

It will be useful to shorten our notation. First we recall that
$$e^{-it\Delta} x e^{it\Delta}=x-2it\nabla$$
where $x$ is here identified with the multiplication operator by $x$, and which can be seen by differentiating with respect to $t$. By the functional calculus we deduce that
$$e^{-it\Delta} f(x) e^{it\Delta}=f(x+2tp)$$
with $p:=-i\nabla$. From this we deduce that
$$e^{-it\Delta}V(t,x)e^{it\Delta}=V(t,x+2tp).$$
Using that $V\geq0$, we can write the Schatten norm as
\begin{align}
&\norm{\int_\R  e^{-it\Delta} V(t,x) e^{it\Delta}\,dt}^{d+2}_{\gS^{d+2}}\nonumber\\
&\qquad=\tr\left(\int_\R  e^{-it\Delta} V(t,x) e^{it\Delta}\,dt\right)^{d+2}\nonumber\\
&\qquad=\tr\left(\int_\R  V(t,x+2tp)\,dt\right)^{d+2}\nonumber\\
&\qquad=\tr\left(\int_\R  \cdots  \int_\R V(t_1,x+2t_1p)\cdots V(t_{d+2},x+2t_{d+2}p)dt_1\cdots dt_{d+2}\right).\label{eq:first-computation}
\end{align}
The first step is to exchange the trace and the integral and, in order to justify this manipulation, we need to prove that
\begin{equation}
\int_\R  \cdots  \int_\R \norm{V(t_1,x+2t_1p)\cdots V(t_{d+2},x+2t_{d+2}p)}_{\gS^1}dt_1\cdots dt_{d+2}<\ii
\label{eq:integrable_trace_norm}
\end{equation}
(at least for $V\in L^\ii_c(\R\times\R^d)$, which we assume throughout here). In order to estimate the trace norm in the integral, we make use of the following 

\begin{lemma}\label{lem:gKSS}
Let $\alpha,\beta,\gamma,\delta\in\R$. We have 
\begin{equation}
\norm{f(\alpha x+\beta p)\, g(\gamma x+\delta p)}_{\gS^r}\leq \frac{\norm{f}_{L^r(\R^d)}\norm{g}_{L^r(\R^d)}}{(2\pi)^{\frac{d}{r}}|\alpha\delta-\beta\gamma|^{\frac{d}{r}}}
\label{eq:gKSS}
\end{equation}
for all $r\geq2$.
\end{lemma}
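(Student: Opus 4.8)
\textbf{Proof plan for Lemma~\ref{lem:gKSS}.}

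The plan is to reduce the general case to the canonical Kato--Seiler--Simon situation $\norm{f(x)\,g(p)}_{\gS^r}\leq (2\pi)^{-d/r}\norm{f}_{L^r}\norm{g}_{L^r}$ (valid for $r\geq 2$) by using metaplectic/symplectic covariance. First I would dispose of the degenerate case $\alpha\delta-\beta\gamma=0$: then $\alpha x+\beta p$ and $\gamma x+\delta p$ are parallel (proportional) operators, the product need not be compact, and the right-hand side is $+\infty$, so there is nothing to prove. Assume now $\alpha\delta-\beta\gamma\neq 0$. The pairs $(\alpha,\beta)$ and $(\gamma,\delta)$ then form a basis of $\R^2$; I want to find a linear symplectic change of phase-space coordinates (implemented unitarily on $L^2(\R^d)$ by a combination of a metaplectic transformation and a dilation) that sends $\alpha x+\beta p$ to $c_1 x$ and $\gamma x+\delta p$ to $c_2 p$ for suitable scalars $c_1,c_2$, after which the KSS bound applies directly and the constants $c_1,c_2$ get absorbed by rescaling $f$ and $g$ (the $L^r$ norms being dilation-covariant in a way that matches the $|\alpha\delta-\beta\gamma|^{-d/r}$ factor by dimensional analysis).

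Concretely, I would proceed as follows. \emph{Step 1.} By replacing $f$ with $f(\alpha\,\cdot)$ (if $\alpha\neq0$) one reduces $\alpha x+\beta p$ to $x+\beta'p$; conjugating by the unitary $e^{i\beta' p^2/2}$ (a ``shear'' in phase space, i.e. $e^{i s p^2/2} x e^{-isp^2/2}=x - s p$ up to sign conventions) turns $x+\beta' p$ into $x$ while turning $\gamma x+\delta p$ into $\gamma' x+\delta' p$ with $\gamma'\delta$-combination still having nonzero ``determinant'' — in fact the determinant $\alpha\delta-\beta\gamma$ is a symplectic invariant and is preserved throughout. If $\alpha=0$ one starts instead by Fourier-conjugating ($x\leftrightarrow p$), which is again unitary and multiplies nothing. \emph{Step 2.} Now I have $\norm{f(x)\,g(\gamma'x+\delta'p)}_{\gS^r}$ with $\delta'\neq0$ (else the determinant would vanish). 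Conjugate by $e^{-i\gamma' x^2/(2\delta')}$ — multiplication by a unimodular function is unitary and commutes with $f(x)$ — to kill the $\gamma' x$ term, leaving $g(\delta' p)$. \emph{Step 3.} Rescale $g$ to absorb $\delta'$ and apply the Kato--Seiler--Simon inequality $\norm{f(x)\,g(p)}_{\gS^r}\leq (2\pi)^{-d/r}\norm{f}_r\norm{g}_r$. \emph{Step 4.} Track how the $L^r$ norms transform under the dilations of Steps 1 and 3: a dilation $h\mapsto h(\lambda\cdot)$ scales $\norm{h}_r$ by $|\lambda|^{-d/r}$, and collecting all the $\lambda$'s produces exactly $|\alpha\delta-\beta\gamma|^{-d/r}$, as forced by the scaling invariance of the whole estimate (this can also be checked a posteriori by a pure scaling argument: replace $(f,g)$ by $(f(\lambda\cdot),g(\mu\cdot))$ and $(x,p)$ by their rescalings and see that both sides scale the same way, so it suffices to prove the bound for one normalization of $\alpha\delta-\beta\gamma$).

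The main obstacle is bookkeeping rather than conceptual: one must make the metaplectic conjugations explicit enough to be sure no hidden constant creeps in, and one must verify that the Kato--Seiler--Simon bound is stated with precisely the constant $(2\pi)^{-d/r}$ in the normalization of the Fourier transform being used (the $(2\pi)$'s in \eqref{eq:gKSS} come from there). An alternative, perhaps cleaner, route that avoids metaplectic operators: write the integral kernel of $f(\alpha x+\beta p)g(\gamma x+\delta p)$ explicitly as an oscillatory integral in terms of $\widehat f,\widehat g$, recognize it (after the linear change of variables with Jacobian $|\alpha\delta-\beta\gamma|$) as a kernel of the form $K(x,y)=(2\pi)^{-d}|\alpha\delta-\beta\gamma|^{-d}\,a(x)\,\check{b}(x-y)\cdots$ type product, and bound its $\gS^r$ norm by $\gS^2$ interpolated with $\gS^\infty$; for $r=2$ the Hilbert--Schmidt norm is computed exactly by Plancherel, for $r=\infty$ one uses $\norm{f}_\infty\norm{g}_\infty$, and complex interpolation gives all $r\geq2$. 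I would present whichever is shorter, but flag that the $r=2$ endpoint is an identity and the $r=\infty$ endpoint is trivial, so that complex interpolation between them is really the heart of the matter.
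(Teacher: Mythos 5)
Your second route (\emph{``an alternative, perhaps cleaner, route''}) is essentially the paper's proof: dispose of $r=\infty$ trivially, compute the $r=2$ case as an exact identity, and interpolate. The paper does the $r=2$ computation not by writing out the Fourier-side kernel but by conjugating $f(\alpha x+\beta p)$ and $g(\gamma x+\delta p)$ by the metaplectic unitaries $e^{\pm i s p^2}$, using cyclicity of the trace to combine them into a single free propagator, and then inserting the explicit Gaussian kernel of $e^{i\tau p^2}$ to evaluate the trace. The closing step is stated in the paper exactly as you predict: ``complex interpolation or, alternatively, from the Lieb--Thirring inequality for matrices.''

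Your first route is a genuinely different and also correct argument: instead of proving a new $r=2$ identity, conjugate the \emph{whole} operator $f(\alpha x+\beta p)\,g(\gamma x+\delta p)$ by a single metaplectic unitary (Fourier transform, quadratic-phase shears, and a scaling), reducing to $f(x)\,g(\lambda p)$ with $\lambda=\alpha\delta-\beta\gamma$, and then invoke the standard Kato--Seiler--Simon bound directly for every $r\geq 2$. The abstract version of your Step 1--3 is just Stone--von Neumann: $X:=\alpha x+\beta p$ and $P:=(\alpha\delta-\beta\gamma)^{-1}(\gamma x+\delta p)$ satisfy $[X_j,P_k]=i\delta_{jk}$, hence $(X,P)$ is unitarily equivalent to $(x,p)$ and the Jacobian factor $|\alpha\delta-\beta\gamma|^{-d/r}$ is produced automatically by the dilation of $g$. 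What this buys you is that no interpolation is needed — you import the $r\geq 2$ result wholesale — at the cost of appealing to the metaplectic representation rather than doing a self-contained trace computation. One small caution on your Step 4: a naive dilation $x\mapsto\lambda x$, $p\mapsto\lambda p$ is not canonical; you must compensate ($x\mapsto\lambda x$, $p\mapsto\lambda^{-1}p$) or absorb the scaling into $g$ as above, otherwise the ``scaling-invariance'' bookkeeping will mislead you. Your treatment of the degenerate case $\alpha\delta-\beta\gamma=0$ is fine.
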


For $\alpha=\delta=1$ and $\beta=\gamma=0$, the estimate is just the well-known Kato-Seiler-Simon inequality
\begin{equation}
\norm{f(x)\, g(p)}_{\gS^r}\leq (2\pi)^{-d/r} \norm{f}_{L^r(\R^d)}\norm{g}_{L^r(\R^d)},
\end{equation}
see~\cite{SeiSim-75} and~\cite[Thm 4.1]{Simon-79}. The generalization~\eqref{eq:gKSS} implicitly appears in~\cite[Sec. 2.1]{BirKarSol-91}.

We postpone the proof of the lemma and go on with the proof of~\eqref{eq:integrable_trace_norm}. Using the fact that $V\geq0$ and Hölder's inequality in Schatten spaces, we write
\begin{align*}
&\norm{V(t_1,x+2t_1p)\cdots V(t_{d+2},x+2t_{d+2}p)}_{\gS^1}\nonumber\\
&\qquad\quad = \Big\|V(t_1,x+2t_1p)\sqrt{V(t_2,x+2t_2p)}\sqrt{V(t_2,x+2t_2p)}\cdots\\
&\qquad\qquad\qquad \cdots  \sqrt{V(t_{d+1},x+2t_{d+1}p)} V(t_{d+2},x+2t_{d+2}p)\Big\|_{\gS^1}\nonumber\\
&\quad\qquad \leq \norm{V(t_1,x+2t_1p)\sqrt{V(t_2,x+2t_2p)}}_{\gS^{d+1}}\times\\
&\qquad\qquad \times\norm{\sqrt{V(t_2,x+2t_2p)}\sqrt{V(t_3,x+2t_3p)}}_{\gS^{d+1}}\times \cdots \nonumber\\
&\qquad\qquad \cdots \times \norm{\sqrt{V(t_{d+1},x+2t_{d+1}p)}V(t_{d+2},x+2t_{d+2}p)}_{\gS^{d+1}}.
\end{align*}
Using now~\eqref{eq:gKSS} and the fact that $V\in L^\ii_c(\R\times\R^d)$, we get
\begin{align}
&\norm{V(t_1,x+2t_1p)\cdots V(t_{d+2},x+2t_{d+2}p)}_{\gS^1}\nonumber\\
&\qquad\leq \frac{\norm{V(t_1,\cdot)}_{L^{d+1}_x}\norm{V(t_2,\cdot)}_{L^{\frac{d+1}2}_x}\cdots \norm{V(t_{d+1},\cdot)}_{L^{\frac{d+1}2}_x}\norm{V(t_{d+2},\cdot)}_{L^{d+1}_x}}{(4\pi)^{d}|t_1-t_2|^{\frac{d}{d+1}}\cdots |t_{d+1}-t_{d+2}|^{\frac{d}{d+1}}}\nonumber\\
&\qquad\leq C\frac{\prod_{j=1}^{d+2}\1(a\leq t_j\leq b)}{(4\pi)^{d}|t_1-t_2|^{\frac{d}{d+1}}\cdots |t_{d+1}-t_{d+2}|^{\frac{d}{d+1}}}\label{eq:estim_trace-norm}
\end{align}
where $(a,b)$ is the support of $V$ in the time variable. At this step we use the multilinear Hardy-Littlewood-Sobolev inequality.

\begin{theorem}[Multilinear Hardy-Littlewood-Sobolev inequality]
Assume that $(\beta_{ij})_{1\leq i,j\leq N}$ and $(r_k)_{1\leq k\leq N}$ are real-numbers such that 
\begin{equation}
\beta_{ii}=0,\quad 0\leq \beta_{ij}=\beta_{ji}<1,\quad r_k>1,\quad \sum_{k=1}^N\frac1{r_k}>1,\quad \sum_{i=1}^N\beta_{ik}=\frac{2(r_k-1)}{r_k}.
\label{eq:conditions_multilinear_HLS}
\end{equation}
Then there exists a constant $C$ such that
\begin{equation}
\left|\int_\R  \cdots \int_\R \frac{f_1(t_1)\cdots f_N(t_{N})}{\prod_{i<j}|t_i-t_j|^{\beta_{ij}}}dt_1\cdots dt_{N}\right|
\leq C\prod_{k=1}^N\norm{f_k}_{L^{r_k}(\R)}
\label{eq:multilinear_HLS}
\end{equation}
for all $f_k\in L^{r_k}(\R)$.
\end{theorem}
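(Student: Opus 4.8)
The plan is to deduce~\eqref{eq:multilinear_HLS} from the classical one-dimensional Hardy--Littlewood--Sobolev inequality
\[
\int_\R\int_\R\frac{g(s)\,h(t)}{|s-t|^\lambda}\,ds\,dt\ \leq\ C_\lambda\,\norm{g}_{L^a(\R)}\,\norm{h}_{L^b(\R)},
\]
valid for $1<a,b<\ii$, $0<\lambda<1$ and $\frac1a+\frac1b+\lambda=2$, by induction on $N$; the case $N=2$ is exactly this statement, since~\eqref{eq:conditions_multilinear_HLS} then forces $r_1=r_2=:a$, $\beta_{12}=\lambda=2-2/a$ and $a<2$. Replacing each $f_k$ by $|f_k|$ we may assume $f_k\geq0$. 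I also record that the last identity in~\eqref{eq:conditions_multilinear_HLS} says precisely that~\eqref{eq:multilinear_HLS} is invariant under $f_k(\cdot)\mapsto f_k(\mu\,\cdot)$, $\mu>0$, which is the necessary scaling consistency.

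For the inductive step I would split the integrand over the edges of the complete graph on $\{1,\dots,N\}$. Choose weights $\theta_e>0$ with $\sum_e\theta_e=1$; for each $k$ a splitting $f_k=\prod_{e\ni k}f_k^{\,a_{k,e}}$ with $a_{k,e}>0$ and $\sum_{e\ni k}a_{k,e}=1$; and for each pair $\{l,m\}$ a splitting $\beta_{lm}=\sum_e s_{lm,e}$ with $s_{lm,e}\geq0$. Setting $g_e:=\prod_{k\in e}f_k(t_k)^{a_{k,e}}\prod_{l<m}|t_l-t_m|^{-s_{lm,e}}$ one has $\prod_e g_e=\prod_k f_k(t_k)\prod_{l<m}|t_l-t_m|^{-\beta_{lm}}$, so H\"older's inequality with exponents $\theta_e^{-1}$ gives
\[
\int_{\R^N}\prod_k f_k(t_k)\prod_{l<m}|t_l-t_m|^{-\beta_{lm}}\,dt_1\cdots dt_N\ \leq\ \prod_e\left(\int_{\R^N}g_e^{\,1/\theta_e}\,dt_1\cdots dt_N\right)^{\theta_e}.
\]
In the factor attached to $e=\{i,j\}$ the variables $t_c$, $c\notin e$, occur only through kernels; integrating them out is an $(N-2)$-dimensional integral of a product of Riesz potentials which, with $t_i,t_j$ frozen, is itself a Hardy--Littlewood--Sobolev functional in $N-2<N$ variables (the external function at each $t_c$ being $|t_c-t_i|^{-\cdot}|t_c-t_j|^{-\cdot}$, which lies in a range of $L^r(\R)$ with norm a power of $|t_i-t_j|$ by scaling). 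By the induction hypothesis this is bounded by a constant times $|t_i-t_j|^{-\gamma_e}$, so the $e$-factor collapses to a constant times $\big(\int_\R\int_\R f_i(t_i)^{a_{i,e}/\theta_e}f_j(t_j)^{a_{j,e}/\theta_e}|t_i-t_j|^{-\gamma_e-s_{ij,e}/\theta_e}\,dt_i\,dt_j\big)^{\theta_e}$; the bilinear inequality bounds this by $C\,\norm{f_i}_{r_i}^{a_{i,e}}\norm{f_j}_{r_j}^{a_{j,e}}$, and multiplying over $e$ the exponent of $\norm{f_k}_{r_k}$ becomes $\sum_{e\ni k}a_{k,e}=1$, giving~\eqref{eq:multilinear_HLS}.

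The argument thus comes down to a finite feasibility problem: the weights $\theta_e$ and splittings $a_{k,e}$, $s_{lm,e}$ must be chosen so that, for every edge $e$, all exponents fed into the inner Hardy--Littlewood--Sobolev functionals lie in their admissible ranges --- each individual exponent $<1$ for local integrability, the exponents meeting each integrated variable summing to more than $1$ for integrability at infinity, and the final bilinear exponent $\gamma_e+s_{ij,e}/\theta_e$ in $(0,1)$ subject to $\tfrac{a_{i,e}}{r_i\theta_e}+\tfrac{a_{j,e}}{r_j\theta_e}+\gamma_e+\tfrac{s_{ij,e}}{\theta_e}=2$. This is a system of linear equalities with open inequality constraints, and I expect it to be solvable exactly when $0\leq\beta_{ij}<1$, $r_k>1$ and $\sum_k1/r_k>1$; verifying this --- i.e.\ propagating the hypotheses through the induction and checking that the borderline case $\sum_k1/r_k=1$ is genuinely excluded --- is the main obstacle, the bookkeeping being heaviest for the $(N-2)$-dimensional auxiliary kernel integrals. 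An alternative that makes the endpoint more transparent is to run the same scheme in Lorentz spaces, using $|t|^{-\beta}\in L^{1/\beta,\ii}(\R)$ and O'Neil's generalization of Young's inequality for the auxiliary integrations together with a final real-interpolation step, at the price of the same feasibility input.
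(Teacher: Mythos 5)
The paper does not prove this theorem: it quotes it from Beckner (Thm.~6) and, in the symmetric special case, from Christ (Prop.~2.2), so there is no in-paper argument to compare against.

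Your proposal has a genuine gap, and it lies exactly where you flag it as ``the main obstacle'': the edge-wise H\"older decomposition is simply \emph{not feasible} on the whole range of parameters permitted by \eqref{eq:conditions_multilinear_HLS}. Take $N=3$, $r_1=r_2=r_3=4/3$ and $\beta_{12}=\beta_{13}=\beta_{23}=1/4$; these satisfy \eqref{eq:conditions_multilinear_HLS} since $\sum_k 1/r_k=9/4>1$ and $\sum_i\beta_{ik}=1/2=2(r_k-1)/r_k$. In your factor for the edge $e=\{1,2\}$, after raising to the power $1/\theta_e$, the only dependence on $t_3$ is through the kernel $|t_1-t_3|^{-s_{13,e}/\theta_e}\,|t_2-t_3|^{-s_{23,e}/\theta_e}$ --- no $f_3$ is present to provide decay --- so for the $t_3$-integral to converge at infinity one needs $s_{13,e}+s_{23,e}>\theta_e$; by the same token the analogous conditions must hold for the edges $\{1,3\}$ and $\{2,3\}$. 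Adding the three necessary inequalities, the right side is $\sum_e\theta_e=1$, while the left side is at most $\beta_{12}+\beta_{13}+\beta_{23}=3/4$: for each unordered pair $\{l,m\}$ the ``in-edge'' allocation $s_{lm,\{l,m\}}$ appears in none of the three conditions, so the $s_{lm,e}$ that do appear sum to at most $\beta_{lm}$. This is a contradiction, so no choice of $(\theta_e,a_{k,e},s_{lm,e})$ makes all the H\"older pieces finite, and the scheme collapses at the H\"older step itself throughout the range $1<r\le 3/2$ of the symmetric $N=3$ case (equivalently, whenever $\sum_{i<j}\beta_{ij}\le 1$). This is not a bookkeeping issue that Lorentz spaces or real interpolation can repair: once the integral has been split multiplicatively by H\"older, each factor must be finite on its own, and divergence of the auxiliary $t_c$-integral at infinity is an absolute obstruction, not a borderline one. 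The cited proofs take a different route; in particular Beckner first reduces to symmetric-decreasing $f_k$ by the Riesz rearrangement inequality and then estimates directly, rather than splitting by H\"older.
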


The multilinear Hardy-Littlewood-Sobolev inequality can be found in~\cite[Thm. 6]{Beckner-95} and, in the particular case where all the $\beta_{ij}$ and the $r_k$ are identical, in~\cite[Prop. 2.2]{Christ-85}. Applying~\eqref{eq:multilinear_HLS} with $r_1=r_{d+2}=2(d+1)/(d+2)$ and $r_2=\cdots=r_{d+1}=d+1$, we conclude that~\eqref{eq:integrable_trace_norm} holds.
Hence we have shown that
\begin{multline*}
\tr\left(\int_\R  e^{-it\Delta} V(t,x) e^{it\Delta}\,dt\right)^{d+2}\\=\int_\R  \cdots  \int_\R \tr\Big(V(t_1,x+2t_1p)\cdots V(t_{d+2},x+2t_{d+2}p)\Big)dt_1\cdots dt_{d+2}.
\end{multline*}

By following the previous argument we will now derive a more symmetric estimate on the trace in the integral. Simply, we use the cyclicity of the trace and get, this time,
\begin{align}
&\left|\tr\Big( V(t_1,x+2t_1p)\cdots V(t_{d+2},x+2t_{d+2}p)\Big)\right|\nonumber\\
&\qquad = \left|\tr\Big(\sqrt{V(t_1,x+2t_1p)}\cdots V(t_{d+2},x+2t_{d+2}p)\sqrt{V(t_1,x+2t_1p)}\Big)\right|\nonumber\\
&\qquad \leq \norm{\sqrt{V(t_1,x+2t_1p)}\sqrt{V(t_2,x+2t_2p)}}_{\gS^{d+2}}\times \cdots \nonumber\\
&\qquad\qquad \cdots \times \norm{\sqrt{V(t_{d+1},x+2t_{d+1}p)}\sqrt{V(t_{d+2},x+2t_{d+2}p)}}_{\gS^{d+2}}\times\nonumber\\
 &\qquad\qquad\qquad \times \norm{\sqrt{V(t_{d+2},x+2t_{d+2}p)}\sqrt{V(t_1,x+2t_1p)}}_{\gS^{d+2}}.
\end{align}
With the aid of~\eqref{eq:gKSS} we obtain
\begin{multline*}
\left|\tr\Big( V(t_1,x+2t_1p)\cdots V(t_{d+2},x+2t_{d+2}p)\Big)\right|\\
\leq \frac{\norm{V(t_1,\cdot)}_{L^{1+d/2}_x}\cdots \norm{V(t_{d+2},\cdot)}_{L^{1+d/2}_x}}{(4\pi)^{d}|t_1-t_2|^{\frac{d}{d+2}}\cdots |t_{d+2}-t_1|^{\frac{d}{d+2}}}.
\end{multline*}
Using again the multilinear Hardy-Littlewood-Sobolev inequality~\eqref{eq:multilinear_HLS}, this time  with $r_1=\cdots =r_{d+2}=1+d/2$, we conclude that
\begin{equation*}
\tr\left(\int_\R  e^{-it\Delta} V(t,x) e^{it\Delta}\,dt\right)^{d+2}\\
\leq C\norm{V}_{L^{1+d/2}_{t,x}}^{d+2},
\end{equation*}
as we wanted.\qed 

\begin{remark}
At the end point $p'=d+1$, $q'=(d+1)/2$ we have precisely $d+1$ functions and the above proof cannot be applied, since $\sum_{k=1}^{d+1} 1/r_k=1$ in this case.
\end{remark}

\begin{remark}
It is useful to think of the semi-classical regime, in which
\begin{multline*}
\tr\Big( V(t_1,x+2t_1p)\cdots V(t_{d+2},x+2t_{d+2}p)\Big)\\
\simeq (2\pi)^{-d}\int_{\R^d}\int_{\R^d}V(t_1,x+2t_1p)\cdots V(t_{d+2},x+2t_{d+2}p)\,dx\,dp.
\end{multline*}
The right side can be estimated by $C_{\rm BL}\prod_{j=1}^N\norm{V(t_j,\cdot)}_{L_x^{1+d/2}(\R^d)}$. The best constant $C_{\rm BL}$ in this inequality was found by Brascamp and Lieb in~\cite{BraLie-76} and it can also be controlled by $|t_1-t_2|^{-d/(d+2)}\cdots |t_{d+2}-t_1|^{-d/(d+2)}$.
\end{remark}

It remains to provide the 

\begin{proof}[Proof of Lemma~\ref{lem:gKSS}]
For $r=\ii$ this is obvious. For $r=2$, we get
\begin{align*}
&\norm{f(\alpha x+\beta p)\, g(\gamma x+\delta p)}_{\gS^2}^2\\
&\quad=\tr \left[|f(\alpha x+\beta p)|^2\,|g(\gamma x+\delta p)|^2\right]\\
&\quad=\tr \left[e^{-i\frac{\beta}{2\alpha}p^2}|f(\alpha x)|^2e^{i\frac{\beta}{2\alpha}p^2}e^{-i\frac{\delta}{2\gamma}p^2}\,|g(\gamma x)|^2e^{i\frac{\delta}{2\gamma}p^2}\right]\\
&\quad=\tr \left[|f(\alpha x)|^2e^{i\frac{\beta\gamma-\alpha\delta}{2\alpha\gamma}p^2}|g(\gamma x)|^2e^{-i\frac{\beta\gamma-\alpha\delta}{2\alpha\gamma}p^2}\right]\\
&\quad=\left|\frac{\alpha\gamma}{2\pi(\beta\gamma-\alpha\delta)}\right|^{d}\iint |f(\alpha x)|^2e^{-i\frac{\alpha\gamma}{2(\beta\gamma-\alpha\delta)}|x-y|^2}|g(\gamma y)|^2e^{i\frac{\alpha\gamma}{2(\beta\gamma-\alpha\delta)}|y-x|^2}\,dx\,dy\\
&\quad=\frac{1}{(2\pi)^d|\beta\gamma-\alpha\delta|^d}\norm{f}_{L^2(\R^d)}^2\norm{g}_{L^2(\R^d)}^2.
\end{align*}
The inequality in $\gS^r$ now follows from complex interpolation or, alternatively, from the Lieb-Thirring inequality for matrices~\cite{LieThi-76}.
\end{proof}

\begin{remark}
By following Cwikel's proof (see~\cite{Cwikel-77} and \cite[Thm. 4.2]{Simon-79}), one can show the weak-type bound
\begin{equation}
\norm{f(\alpha x+\beta p)\, g(\gamma x+\delta p)}_{\gS^r_{\rm w}}\leq C_{d,r}\,\frac{\norm{f}_{L^r_{\rm w}(\R^d)}\norm{g}_{L^r(\R^d)}}{|\alpha\delta-\beta\gamma|^{\frac{d}{r}}}
\label{eq:gCwikel}
\end{equation}
for all $2<r<\ii$.
\end{remark}

\subsection{Proof of Proposition~\ref{prop:semi-classical}: optimality of the Schatten exponent}\label{sec:proof_prop_semi-classics}

Our proof is based on coherent states and ideas from semi-classical analysis. We will introduce a family of operators $\gamma$ depending on three positive parameters $\beta$, $L$ and $\mu$, which will be chosen appropriately at the end of the proof. To define $\gamma$ we use coherent states $F_{x,\xi}$, depending on parameters $x,\xi\in\R^d$,
$$
F_{x,\xi}(z) = (2\pi\beta)^{-d/4} e^{-(z-x)^2/(4\beta)} e^{i\xi\cdot z} \,.
$$
These functions are normalized in $L^2(\R^d)$ and satisfy
$$
\iint_{\R^d\times\R^d} \frac{dx\,d\xi}{(2\pi)^d} |F_{x,\xi}\rangle\langle F_{x,\xi}| = 1 \,.
$$
Now we define
$$
\gamma = \iint_{\R^d\times\R^d}  \frac{dx\,d\xi}{(2\pi)^d} e^{-x^2/L^2 - \xi^2/\mu} |F_{x,\xi}\rangle\langle F_{x,\xi}| \,.
$$
The relevant parameter in our computation is
$$
N = \int_{\R^d} \gamma(z,z) \,dz = \iint_{\R^d\times\R^d}  \frac{dx\,d\xi}{(2\pi)^d} e^{-x^2/L^2 - \xi^2/\mu} = A_d L^d \mu^{d/2}
$$
with an explicit constant $A_d$, depending only on $d$.

Obviously, $\gamma\geq 0$ and, by the Berezin--Lieb inequality~\cite{Berezin-72,Lieb-73b},
$$
\tr \gamma^r \leq \iint_{\R^d\times\R^d}  \frac{dx\,d\xi}{(2\pi)^d} e^{-rx^2/L^2 - r\xi^2/\mu} = r^{-d} N \,
$$
for $r\geq1$. Therefore the denominator in \eqref{eq:optimal-Schatten} does not exceed $r^{-d/r} N^{1/r}$, uniformly in $\beta$. To finish the proof we will now show that, by choosing $\beta,L,\mu$ appropriately, we can bound the numerator from below by a constant times $N^{(q+1)/(2q)}$. Then we choose $\mu$ and $L$ large and we get the result in the limit $N\to\infty$.

To carry out this strategy we compute the left side explicitly. We give the main steps of the computation. First,
$$
| e^{it\Delta} F_{x,\xi}(z)| = \left( \frac{\beta}{2\pi (\beta^2+t^2)}\right)^{d/4} e^{-\frac{\beta}{4(\beta^2+t^2)} (z-x+2t\xi)^2} \,.
$$
Next,
$$
\int_{\R^d} d\xi\, e^{-\xi^2/\mu} | e^{it\Delta} F_{x,\xi}(z)|^2 = \left( \frac{\beta\mu}{2(\beta^2 +t^2+{2}\beta\mu t^2)} \right)^{d/2} e^{- \frac{\beta}{{2}(\beta^2+t^2+{2}\beta\mu t^2)} (x-z)^2}
$$
and
\begin{align*}
\rho_{\gamma(t)}(z)&:=\rho_{e^{it\Delta }\gamma e^{-it\Delta }}(z)\\
 & = \iint_{\R^d\times\R^d} \frac{dx\,d\xi}{(2\pi)^d} e^{-x^2/L^2 -\xi^2/\mu} | e^{it\Delta} F_{x,\xi}(z)|^2 \\
& = \left( \frac{\beta\mu L^2}{4\pi({2\beta^2+\beta L^2 + 2t^2+4\beta\mu t^2})} \right)^{d/2} e^{-\frac{\beta}{2\beta^2+\beta L^2 + 2t^2+4\beta\mu t^2} z^2 } \,.
\end{align*}
We finally compute the $L^p_t(L^q_x)$ norm of this expression. We have
\begin{multline*}
\int_{\R^d} dz\, \rho_{\gamma(t)}(z)^q\\ = \left(\frac\pi{q}\right)^{d/2} (4\pi)^{-dq/2} \left(\mu L^2 \right)^{dq/2} \left( \frac{\beta}{{2\beta^2+\beta L^2 + 2t^2+4\beta\mu t^2}} \right)^{(q-1)d/2} 
\end{multline*}
and, using the fact that $p(q-1)d/q=2$,
\begin{align*}
\int_\R dt \left( \int_{\R^d} dz\, \rho_{\gamma(t)}(z)^q \right)^{p/q} = A_{d,p}^p \left(\mu L^2 \right)^{dp/2} \frac{\beta}{\sqrt{\beta L^2 + 2\beta^2}\ \sqrt{2+4\beta\mu} } \,,
\end{align*}
where
$$
A_{d,p}^p = \left(\frac\pi{q}\right)^{dp/(2q)} (4\pi)^{-dp/2} \int_\R \frac{ds}{1+s^2}=(q\pi)^{dp/2-1}2^{-dp}\,.
$$
Thus,
\begin{align*}
&\left( \int_\R dt \left( \int_{\R^d} dz\, \rho_{\gamma(t)}(z)^q \right)^{p/q} \right)^{1/p}\\
&\qquad\qquad = A_{d,p} \left(\mu L^2 \right)^{d/2} \left( \frac{1}{L^2 + {2}\beta} \right)^{1/(2p)} \left(\frac{1}{4\mu+2\beta^{-1}} \right)^{1/(2p)} \\
&\qquad\qquad = A_{d,p} (\mu L^2)^{d/2 - 1/(2p)} \left( 1+ \frac{{2}\beta}{L^2} \right)^{-1/(2p)} \left({ 4+ \frac{2}{\beta\mu}}\right)^{-1/(2p)} .
\end{align*}
Since $d/2-1/(2p)= d(q+1)/(4q)$, we have shown that
\begin{multline*}
\left( \int_\R dt \left( \int_{\R^d} dz\, \rho_{\gamma(t)}(z)^q \right)^{p/q} \right)^{1/p}\\ = \frac{A_{d,p}}{A_d^{(q+1)/(2q)}} N^{(q+1)/(2q)} \left( 1+ \frac{{2}\beta}{L^2} \right)^{-1/(2p)} \left( { 4+ \frac{2}{\beta\mu}} \right)^{-1/(2p)} \,. 
\end{multline*}
In a parameter regime where $1/\mu \ll \beta \ll L^2$, we obtain
$$
\left( \int_\R dt \left( \int_{\R^d} dz\, \rho_{\gamma(t)}(z)^q \right)^{p/q} \right)^{1/p} \sim \frac{{2^{-1/p}}A_{d,p}}{A_d^{(q+1)/(2q)}} N^{(q+1)/(2q)} \,,
$$
as claimed.\qed

\subsection{Proof of Proposition~\ref{prop:end-point}: the end point}
Let us define the operator
\begin{equation}
B_V:=\int_{\R}e^{-it\Delta}V(t,x)e^{it\Delta}\,dt,
\label{eq:def_A}
\end{equation}
whose kernel in Fourier space is given by 
$$\widehat{B}_V(p,q)=(2\pi)^{-\frac{d}2}\int_{\R}e^{itp^2}\cF_xV(t,p-q)e^{-itq^2}\,dt=(2\pi)^{\frac{1-d}2}\widehat{V}(p^2-q^2,p-q).$$
Here $\cF_x$ is the Fourier transform with respect to the space variable $x$ and $\widehat{V}=\cF_t\cF_xV$ denotes the Fourier transform of $V$ with respect to both space and time.
We deduce that
\begin{align*}
\tr B_V^{d+1}& =\int_{\R^d}dp\int_{\R^d}dp_1\cdots\int_{\R^d}dp_{d}\, \widehat{B}_V(p,p_1)\widehat{B}_V(p_1,p_2)\cdots\\
&\qquad\qquad\cdots \widehat{B}_V(p_{d-1},p_{d})\widehat{B}_V(p_{d}, p)\\
&=(2\pi)^{\frac{1-d^2}2}\int_{\R^d}dp\int_{\R^d}dp_1\cdots \int_{\R^d}dp_d\;\widehat{V}(p^2-p_1^2,p-p_1)\times\\
&\quad \times\widehat{V}(p_1^2-p_2^2,p_1-p_2)\cdots \widehat{V}(p_{d-1}^2-p_d^2,p_{d-1}-p_d)\widehat{V}(p_d^2-p^2,p_d-p).
\end{align*}
Note that the integral always makes sense in $[0,\ii]$ since $\widehat{V}\geq0$ by assumption. We now introduce new variables as follows:
$$\begin{cases}
k_1=p-p_1\\
k_2=p_1-p_2\\
\vdots\\
k_{d}=p_{d-1}-p_d\\
u=p+p_d.
  \end{cases}$$
A simple calculation shows that
\begin{multline*}
\tr B_V^{d+1}=2^{-\frac{(d+1)^2}{2}}\pi^{\frac{1-d^2}2}\int_{\R^d}du\int_{\R^d}dk_1\cdots \int_{\R^d}dk_d\\
\widehat{V}\big(k_1\cdot(u+k_2+\cdots +k_d),k_1\big)\widehat{V}\big(k_2\cdot(u-k_1+k_3+\cdots +k_d),k_2\big)\times\cdots\\
\cdots \times\widehat{V}\big(k_d\cdot(u-k_1-\cdots-k_{d-1}),k_d)\widehat{V}\big(-u\cdot(k_1+\cdots +k_d),-k_1-\cdots -k_d\big).
\end{multline*}
We change again variables and define $v:=K^{-1} u$ where $K$ is the matrix which contains $k_1,...,k_d$ on its rows. This matrix is such that $(K^T)^{-1}k_i=e_i$, the canonical basis. We get
\begin{multline*}
\tr B_V^{d+1}=2^{-\frac{(d+1)^2}{2}}\pi^{\frac{1-d^2}2}\int_{\R^d}dv\int_{\R^d}dk_1\cdots \int_{\R^d}dk_d\;\frac{1}{|\det K|}\\
\widehat{V}\big(v_1+k_1\cdot(k_2+\cdots +k_d),k_1\big)\widehat{V}\big(v_2+k_2\cdot(-k_1+k_3+\cdots +k_d),k_2\big)\times\cdots\\
\cdots \times\widehat{V}\big(v_d+k_d\cdot(-k_1-\cdots-k_{d-1}),k_d)\widehat{V}\big(-v_1-\cdots -v_d,-k_1-\cdots -k_d\big).
\end{multline*}
% Finally, it is convenient to let, this time,
% $$\begin{cases}
% w_1=v_1+k_1\cdot k_2+\cdots +k_1\cdot k_d\\
% w_2=v_2-k_2\cdot k_1+k_2\cdot k_3+\cdots +k_2\cdot k_d\\
% \vdots\\
% w_d=v_d-k_d\cdot k_1-\cdots-k_d\cdot k_{d-1}.
%   \end{cases}$$
% which is such that $w_1+\cdots +w_d=v_1+\cdots +v_d$. We get
Changing again variables we get
\begin{multline*}
\tr B_V^{d+1}=2^{-\frac{(d+1)^2}{2}}\pi^{\frac{1-d^2}2}\int_{\R^d}dw\int_{\R^d}dk_1\cdots \int_{\R^d}dk_d\;\frac{1}{|\det K|}
\widehat{V}\big(w_1,k_1\big)\times\\
\times\widehat{V}\big(w_2,k_2\big)\cdots\widehat{V}\big(w_d,k_d)\widehat{V}\big(-w_1-\cdots -w_d,-k_1-\cdots -k_d\big).
\end{multline*}
Note that 
$$|\det(K)|=|k_1|\cdots|k_d|\;|\det(\omega_1,...,\omega_d)|$$ 
with $\omega_j=k_j|k_j|^{-1}$. Since $V\in L^\ii_c(\R\times\R^d)$, we have $\widehat{V}>0$ on an open set, and we see that our integral can only be finite if the function 
$$(\omega_1,...,\omega_d)\in (\mathbb{S}^{d-1})^d\mapsto |\det(\omega_1,...,\omega_d)|^{-1}$$
belongs to $L^1(\mathbb{S}^{d-1})^d$. But it is well-known that this is never the case (see, e.g.,~\cite{Drudy-88,Gressman-11,Valdimarsson-12} and the references therein).
For instance, in dimension $d=3$, we can compute in spherical coordinates
\begin{align*}
&\int_{\mathbb{S}^2}d\omega_1\int_{\mathbb{S}^2}d\omega_2\int_{\mathbb{S}^2}d\omega_3\frac{1}{|\det(\omega_1,\omega_2,\omega_3)|}\\
&\qquad=4\pi \int_{\mathbb{S}^2}d\omega_2\int_{\mathbb{S}^2}d\omega_3\frac{1}{|\det(e_3,\omega_2,\omega_3)|}\\
&\qquad=4\pi \int_0^\pi \sin\theta\, d\theta \int_0^{2\pi} d\phi\int_0^\pi \sin\theta'\, d\theta' \int_0^{2\pi} d\phi' \frac{1}{\sin\theta \sin\theta' |\sin(\phi-\phi')|}\\
&\qquad=+\ii.
\end{align*}
In the first line we have used that by rotation-invariance, the integral over $\omega_2$ and $\omega_3$ does not depend on $\omega_1$. In the second line we have written $\omega_2=(\cos\phi\,\sin\theta,\sin\phi\,\sin\theta,\cos\theta)$ and $\omega_3=(\cos\phi'\,\sin\theta',\sin\phi'\,\sin\theta',\cos\theta')$, which gives 
\begin{align*}
|\det (e_3,\omega_2,\omega_3)|&=|\cos\phi\sin\theta\sin\phi'\sin\theta'-\cos\phi'\sin\theta'\sin\phi\sin\theta|\\
&=\sin\theta\,\sin\theta'\,|\sin(\phi-\phi')|.
\end{align*}
The argument is similar in other dimensions.
\qed

\subsection{Proof of Theorem~\ref{thm:wave-op}: the wave operator}
We have already estimated $\cW_{V,\pm}^{(1)}(t_0)$ in $\gS^{2q'}$ in Theorem~\ref{thm:version_V} and the proof can be applied in the same way to bound the $\gS^{m}$ norm of $\cW_{V,\pm}^{(n)}(t_0)$, where $m=2\lceil q'/n\rceil$ is the smallest even integer which is $\geq2q'/n$. We need an even integer to have that $\norm{\cW_{V,\pm}^{(n)}(t_0)}_{\gS^m}^m=\tr \big(\cW_{V,\pm}^{(n)}(t_0)\cW_{V,\pm}^{(n)}(t_0)^*\big)^{m/2}$.
We only have to discuss the large-$n$ behavior of the constant in this estimate. To do so, we assume $n>q'$ and we look at
\begin{align*}
\norm{\cW^{(n)}_{V,+}(t_0)}^2_{\gS^2}=&\int_{t_0\leq t_1\leq\cdots \leq t_n}dt_1\cdots dt_n\int_{t_0\leq s_1\leq\cdots \leq s_n}ds_1\cdots ds_n\\ 
&\quad\tr\bigg(V\big(t_{n},x+2(t_n-t_0)p\big)\cdots \cdots V\big(t_1,x+2(t_1-t_0)p\big)\times\\
&\quad\times V\big(s_{1},x+2(s_1-t_0)p\big)\cdots \cdots V\big(s_n,x+2(s_n-t_0)p\big)\bigg).
\end{align*}
The argument is exactly the same for $\cW_{V,-}^{(n)}$.
Using~\eqref{eq:gKSS} as in the proof of Theorem~\ref{thm:version_V}, we find
\begin{multline*}
\norm{\cW^{(n)}_{V,+}(t_0)}^2_{\gS^2}
\leq (4\pi)^{-\frac{dn}{q'}}\int_{t_1\leq\cdots \leq t_n}dt_1\cdots dt_n\int_{s_1\leq\cdots \leq s_n}ds_1\cdots ds_n\\
\times\frac{v(t_1)\cdots v(t_n)\,v(s_1)\cdots v(s_n)}{|t_1-t_2|^{\frac{d}{2q'}}\cdots |t_{n}-s_n|^{\frac{d}{2q'}}|s_{n}-s_{n-1}|^{\frac{d}{2q'}}\cdots |s_1-t_1|^{\frac{d}{2q'}}}
\end{multline*}
where we have denoted $v(s):=\norm{V(s,\cdot)}_{L^{q'}(\R^d)}\1(s\geq t_0)$ for short. Now we introduce two parameters $0<\theta<1$ and $\alpha>p'$ (to be chosen later) and we write $v=v^\theta v^{1-\theta}$. By Hölder's inequality we find
\begin{multline*}
\norm{\cW^{(n)}_{V,+}(t_0)}^2_{\gS^2}\\
\leq (4\pi)^{-\frac{dn}{q'}}\left(\iint_{\substack{t_1\leq\cdots \leq t_n\\s_1\leq\cdots \leq s_n}}v(t_1)^{\theta\alpha}\cdots v(t_n)^{\theta\alpha}v(s_1)^{\theta\alpha}\cdots v(s_n)^{\theta\alpha}\right)^{\frac1{\alpha}}\\
\times\left(\iint_{\substack{t_1\leq\cdots \leq t_n\\s_1\leq\cdots \leq s_n}}\frac{v(t_1)^{(1- \theta)\alpha'}\cdots v(t_n)^{(1- \theta)\alpha'}\,v(s_1)^{(1- \theta)\alpha'}\cdots v(s_n)^{(1- \theta)\alpha'}}{|t_1-t_2|^{\frac{d\alpha'}{2q'}}\cdots |t_{n}-s_n|^{\frac{d\alpha'}{2q'}}|s_{n}-s_{n-1}|^{\frac{d\alpha'}{2q'}}\cdots |s_1-t_1|^{\frac{d\alpha'}{2q'}}}\right)^{\frac{1}{\alpha'}}
\end{multline*}
with $\alpha'=\alpha/(\alpha-1)$.
By symmetry of the integrand with respect to the times $t_j$ and $s_j$, we have
\begin{equation*}
\iint_{\substack{t_1\leq\cdots \leq t_n\\s_1\leq\cdots \leq s_n}}v(t_1)^{\theta\alpha}\cdots v(t_n)^{\theta\alpha}v(s_1)^{\theta\alpha}\cdots v(s_n)^{\theta\alpha}=\frac{1}{(n!)^2}\left(\int_\R v(t)^{\theta\alpha}\,dt\right)^{2n}.
\end{equation*}
For the other integral, we drop the time ordering for an upper bound, and we remark that it can then be written as a trace
\begin{align*}
&\iint\frac{v(t_1)^{(1- \theta)\alpha'}\cdots v(t_n)^{(1- \theta)\alpha'}\,v(s_1)^{(1- \theta)\alpha'}\cdots v(s_n)^{(1- \theta)\alpha'}}{|t_1-t_2|^{\frac{d\alpha'}{2q'}}\cdots |t_{n}-s_n|^{\frac{d\alpha'}{2q'}}|s_{n}-s_{n-1}|^{\frac{d\alpha'}{2q'}}\cdots |s_1-t_1|^{\frac{d\alpha'}{2q'}}}\\
&\qquad=(2\pi)^{-n}A^{2n}\;\tr_{L^2(\R)}\left(\frac{1}{|i\partial_t|^{\frac{1-d\alpha'/(2q')}{2}}}\,v(t)^{(1- \theta)\alpha'}\frac{1}{|i\partial_t|^{\frac{1-d\alpha'/(2q')}{2}}}\right)^{2n}\\
&\qquad=(2\pi)^{-n}A^{2n}\;\norm{\frac{1}{|i\partial_t|^{\frac{1-d\alpha'/(2q')}{2}}}\,v(t)^{\frac{(1- \theta)\alpha'}2}}_{\gS^{4n}(L^2(\R))}^{4n}.
\end{align*}
Here 
$$A=2^{\frac12(1-d\alpha'/q')}\frac{\Gamma\left(\frac{1-d\alpha'/(2q')}{2}\right)}{\Gamma\left(\frac{d\alpha'}{4q'}\right)}$$
is the constant such that $A|p|^{d\alpha'/(2q')-1}$ is the Fourier transform of $|t|^{-d\alpha'/(2q')}$.
Now we use Cwikel's inequality 
$$\norm{g(i\partial_t)f(t)}_{\gS^r_{\rm w}}\leq C_r\norm{g}_{L^r_{\rm w}}\norm{f}_{L^r},\qquad \forall r>2$$
(see~\cite{Cwikel-77} and \cite[Thm. 4.2]{Simon-79}) and get
\begin{align*}
\norm{\frac{1}{|i\partial_t|^{\frac{1-d\alpha'/(2q')}{2}}}\,v(t)^{\frac{(1- \theta)\alpha'}2}}_{\gS^{4n}}&\leq \norm{\frac{1}{|i\partial_t|^{\frac{1-d\alpha'/(2q')}{2}}}\,v(t)^{\frac{(1- \theta)\alpha'}2}}_{\gS_{\rm w}^{\frac{2}{1-d\alpha'/(2q')}}}\\
&\leq C\norm{v}_{L^{\frac{(1- \theta)\alpha'}{1-d\alpha'/(2q')}}}^{(1- \theta)\alpha'/2},
\end{align*}
provided that $4n>2(1-d\alpha'/(2q'))^{-1}$. In conclusion we have proved the inequality
\begin{equation*}
\norm{\cW^{(n)}_{V,+}(t_0)}_{\gS^2}
\leq \frac{C^n}{(n!)^{\frac1\alpha}}\norm{v}_{L^{\theta\alpha}}^{\theta n}\norm{v}_{L^{\frac{(1- \theta)\alpha'}{1-d\alpha'/(2q')}}}^{(1- \theta)n}.
\end{equation*}
In order to get our result, we have to choose $\theta$ and $\alpha$ such as to satisfy the conditions
$$0\leq \theta\leq 1,\quad 1<\alpha'<\frac{2q'}{d},\quad \theta\alpha=\frac{(1-\theta)\alpha'}{1-\frac{d\alpha'}{2q'}},\quad 4n>\frac{2}{1-\frac{d\alpha'}{2q'}}.$$
For any fixed $1<\alpha'<2q'/d$ (where we recall that $q'\geq d+2$), we can find $\theta\in(0,1)$ satisfying the above equations. A simple calculation then shows that
$\theta \alpha = p',$
as we want. Choosing $n$ larger than $2^{-1}(1-d\alpha'/(2q'))^{-1}$ finally gives
\begin{equation*}
\norm{\cW^{(n)}_{V,+}(t_0)}_{\gS^2}
\leq \frac{C^n}{(n!)^{\frac1\alpha}}\norm{v}_{L^{p'}}^{n}=\frac{C^n}{(n!)^{\frac1\alpha}}\norm{V}_{L^{p'}_t(L^{q'}_x)}^{n},
\end{equation*}
as was claimed.\qed

\bigskip

\noindent\textbf{Acknowledgement.} M.L. would like to thank Philippe Gravejat for stimulating discussions. Grants from the U.S.~NSF 
PHY-1068285 (R.F.), PHY-0965859 (E.L.), NSERC (R.S.), from the Simons Foundation \#230207 (E.L.), and from the ERC MNIQS-258023 (M.L.) are gratefully acknowledged.

%%%%%%%%%%%%%%%%%%%%%%%%%%%%%%%%%%%%%%%%%%
%%%%%%%%%%%%%%%%%%%%%%%%%%%%%%%%%%%%%%%%%%
% \bibliographystyle{siam}
% \bibliography{biblio}

\end{document}